\documentclass{amsart}
\numberwithin{equation}{section}
\usepackage[english]{babel}
\usepackage[T1]{fontenc}
\usepackage[latin1]{inputenc}
\usepackage{indentfirst}
\usepackage{enumitem}
\usepackage{amsmath,amssymb, amsbsy}
\usepackage{amsfonts}
\usepackage{hyperref}
\usepackage{latexsym}
\usepackage{amsthm}
\usepackage[dvips]{graphicx}
\DeclareGraphicsExtensions{.pdf,.png,.jpg,.eps}

\usepackage{color}
\usepackage[latin1]{inputenc}
\usepackage[active]{srcltx}
\setlength{\topmargin}{16pt} \setlength{\headheight}{20pt}
\setlength{\headsep}{30pt}
\setlength{\textwidth}{15cm}
\setlength{\textheight}{19cm}
\setlength{\oddsidemargin}{1cm} 
\setlength{\evensidemargin}{1cm} 
\definecolor{Arancio}{cmyk}{0,0.61,0.87,0}

\definecolor{blus}{RGB}{0,102,204}

\newcommand{\brd}[1]{\mathbb{#1}}
\newcommand{\R}{\brd{R}}
\newcommand{\abs}[1]{\left\lvert {#1} \right\rvert}

\newcommand\ddfrac[2]{\frac{\displaystyle #1}{\displaystyle #2}}
\newtheorem{teo}{Theorem}[section]
\newtheorem{Corollary}[teo]{Corollary}
\newtheorem{Lemma}[teo]{Lemma}
\newtheorem{Theorem}[teo]{Theorem}
\newtheorem{Proposition}[teo]{Proposition}
\theoremstyle{definition}

\newtheorem{remark}[teo]{Remark}

\def\avint{\mathop{\,\rlap{-}\!\!\int}\nolimits}

\begin{document}

\title[Boundary regularity estimates in H\"older spaces with variable exponent]
{Boundary regularity estimates in H\"older spaces with variable exponent}
\date{\today}

\author{Stefano Vita}

\address[S. Vita]{Dipartimento di Matematica
\newline\indent
Politecnico di Milano
\newline\indent
Via Andrea Maria Amp\`ere 2, 20133, Milano, Italy}
\email{stefano.vita@polimi.it}

\thanks{{\it 2020 Mathematics Subject Classification:}
35B45, 
35B65, 
35J25, 
46E30. 
\\
  \indent {\it Keywords:} Schauder estimates; boundary regularity;
variable exponent spaces.
}

\thanks{{\it Acknowledgment:} The author is a research fellow of Istituto Nazionale di Alta Matematica INDAM}

\maketitle


\begin{abstract}
We present a general blow-up technique to obtain local regularity estimates for solutions, and their derivatives, of second order elliptic equations in divergence form in H\"older spaces with variable exponent. The procedure allows to extend the estimates up to a portion of the boundary where Dirichlet or Neumann boundary conditions are prescribed and produces a Schauder theory for partial derivatives of solutions of any order $k\in\mathbb N$. The strategy relies on the construction of a class of suitable regularizing problems and an approximation argument. The estimates we obtain are sharp with respect to the regularity or integrability conditions on variable coefficients, boundaries, boundary data and right hand sides respectively in H\"older and Lebesgue spaces, both with variable exponent.
\end{abstract}


\section{Introduction}
Given a bounded domain $\Omega$ in $\R^n$ with locally $C^1$ boundary around $0\in\partial\Omega$, $n\geq2$, let us consider the following second order uniformly elliptic problem in divergence form
\begin{equation}\label{eq2}
\begin{cases}
-\mathrm{div}\left(A\nabla u\right)=f+\mathrm{div}F+Vu+b\cdot\nabla u &\mathrm{in \ } B_r\cap\Omega\\
u=g\quad\mathrm{or}\quad A\nabla u\cdot\nu=h &\mathrm{on \ } B_r\cap\partial\Omega,
\end{cases}
\end{equation}
where $B_r=\{x\in\R^n \ : \ |x|<r\}$ and the variable coefficients matrix $A(x)=(a_{ij}(x))_{i,j=1}^{n}$ is continuous, symmetric and uniformly elliptic, i.e.
$$\lambda |\xi|^2\leq A(x)\xi\cdot\xi\leq \Lambda |\xi|^2\qquad\mathrm{for \ given \ }0<\lambda\leq\Lambda<+\infty.$$
We are interested in local regularity estimates up to the boundary in variable exponent H\"older spaces for solutions to \eqref{eq2} and their derivatives which are sharp with respect to integrability or regularity conditions on given data and on the boundary $\partial\Omega$, respectively in variable exponent Lebesgue and H\"older spaces.
%

Regularity theory for linear partial differential equations in variable exponent spaces comprehends many contributions. Just to cite a few, we mention two seminal papers by Diening, R\r{u}\v{z}i\v{c}ka and collaborators \cite{DiePoisson,DieCrelle} where the authors deal with regularity in variable exponent Sobolev spaces for the divergence, the Poisson and the Stokes problems (see also \cite{DieRuz1,DieRuz2}). Local regularity estimates for solutions to second order uniformly elliptic equations in H\"older spaces with variable exponent were obtained in \cite{BieGor1,BieGor2} both for operators in non-divergence and in divergence form. We also report \cite{Bie}, where the author deals with the parabolic case. Moreover, there exists an extensive production on regularity results for nonlinear equations with non standard growth (the model operator is the $p(\cdot)$-Laplacian); just to name a few we refer to \cite{AceMin1,AceMin2}.

Our target is twofold: on one side, we introduce a blow-up procedure which allows to obtain estimates in variable exponent H\"older spaces for solutions to second order linear equations; on the other, we are able to cover the results in \cite{BieGor2} with very different techniques and extend the regularity estimates up to a piece of the boundary where Dirichlet or Neumann data are prescribed, dealing additionally with transport terms. Eventually, we show how to iterate our gradient estimates in order to provide a complete Schauder theory for derivatives of solutions of any order $k\in\mathbb{N}$. Our approach uses very few facts about variable exponent spaces and relies mostly on a perturbation argument and an approximation scheme.

\subsection{Variable exponent spaces}
Although the theory of variable exponent spaces has been developed mostly in the last twenty years, the related literature is very extensive and we will not be able to list all the known properties of these spaces, but we will recall from time to time only some properties that will be needed throughout the proofs. We invite the reader who is interested in deepening the knowledge of variable exponent spaces to the reading of the monograph \cite{Die}, which collects a substantial part of the theory on this subject.
For an accurate definition of the variable exponent Lebesgue space $L^{p(\cdot)}(\Omega)$ we refer to \cite[Chapter 3]{Die}. Briefly, a measurable function $p:\Omega\to[1,+\infty)$ is called exponent. Let $\underline p=\inf_\Omega p(x)$ and $\overline p=\sup_\Omega p(x)$. If $\overline p<+\infty$ then $p$ is called bounded exponent. For a bounded exponent $p$, the variable exponent Lebesgue space $L^{p(\cdot)}(\Omega)$ consists of measurable functions $f:\Omega\to\R$ such that the modular
\begin{equation*}
\rho_\Omega^p(f):=\int_{\Omega}|f(x)|^{p(x)}\mathrm{d}x
\end{equation*}
is finite. The norm is defined as
\begin{equation*}
\|f\|_{L^{p(\cdot)}(\Omega)}=\inf\{\lambda>0 \ : \ \rho_\Omega^p(f/\lambda)\leq1\}.
\end{equation*}
A precise definition of the variable exponent H\"older space $C^{0,\alpha(\cdot)}(\Omega)$ can be found in \cite{AlmSam0,AlmSam}. Let $\alpha:\Omega\subseteq\R^n\to\R$ be a measurable function such that $0<\underline\alpha\leq\alpha(x)\leq\overline\alpha\leq1$. The variable exponent H\"older space $C^{0,\alpha(\cdot)}(\Omega)$ consists of continuous functions such that
\begin{equation*}
\|u\|_{C^{0,\alpha(\cdot)}(\Omega)}=\|u\|_{L^\infty(\Omega)}+[u]_{C^{0,\alpha(\cdot)}(\Omega)}
\end{equation*}
is finite, where the variable H\"older seminorm is defined as
\begin{equation}\label{holderseminorm1}
[u]_{C^{0,\alpha(\cdot)}(\Omega)}=\sup_{\substack{x,y\in\Omega \\ 0<|x-y|\leq1}}\frac{|u(x)-u(y)|}{|x-y|^{\alpha(x)}}=\sup_{\substack{x,y\in\Omega \\ 0<|x-y|\leq1}}\frac{|u(x)-u(y)|}{|x-y|^{\max\{\alpha(x),\alpha(y)\}}}.
\end{equation}
We remark here that since we are interested in local regularity, without loss of generality, we can always localize our problems in small balls $B_r$ of radius $r\leq 1/2$. In this way, the condition of \textsl{closeness of points} $|x-y|\leq1$ in the definition of the H\"older seminorm in \eqref{holderseminorm1} becomes natural and can be avoided.
In general, for $k\in\mathbb N$ we say that a $C^k(\Omega)$ function $u$ belongs to $C^{k,\alpha(\cdot)}(\Omega)$ if
\begin{equation*}
\|u\|_{C^{k,\alpha(\cdot)}(\Omega)}=\sum_{i=0}^k\sum_{|\beta_i|=i}\|D^{\beta_i}u\|_{L^\infty(\Omega)}+[u]_{C^{k,\alpha(\cdot)}(\Omega)}
\end{equation*}
is finite, where
\begin{equation*}
[u]_{C^{k,\alpha(\cdot)}(\Omega)}=\sup_{\substack{x,y\in\Omega \\ 0<|x-y|\leq1\\ |\beta_k|=k}}\frac{|D^{\beta_k} u(x)-D^{\beta_k} u(y)|}{|x-y|^{\alpha(x)}}
\end{equation*}
and $D^{\beta_i}u$ is a partial derivative of $u$ of order $i = |\beta_i|$, with $\beta_i\in\mathbb N^n$ multiindex.

A very important condition on exponents which ensures many properties of variable exponent spaces is the $\log$-H\"older continuity. A continuous and bounded function $u:\Omega\to\R$ is said to be $\log$-H\"older continuous if there exists a positive constant such that for any $x,y\in\Omega$ with $x\neq y$
\begin{equation}\label{log}
|u(x)-u(y)|\leq \frac{c}{\log\left(e+\frac{1}{|x-y|}\right)}.
\end{equation}
We will denote the space of $\log$-H\"older continuous functions by $C^{0,1/|\log\cdot|}(\Omega)$. Then, the following are the families of exponents that we are going to consider
\begin{equation*}
\mathcal{P}^{\log}(\Omega)=\left\{p\in C^{0,1/|\log\cdot|}(\Omega) \ : \ 1<\underline p\leq p(x)\leq\overline p<+\infty\right\}
\end{equation*}
and
\begin{equation*}
\mathcal{A}^{\log}(\Omega)=\left\{\alpha\in C^{0,1/|\log\cdot|}(\Omega) \ : \ 0<\underline\alpha\leq\alpha(x)\leq\overline\alpha<1\right\}.
\end{equation*}
We will often indicate the $\log$-H\"older modulus of continuity in \eqref{log}, which is monotone increasing, by the equivalent (for small $0<t<<1$)
\begin{equation*}
\omega(t)=\frac{1}{|\log t|}.
\end{equation*}

\subsection{Structure of the paper and main results}
Throughout the paper, our strategy is the following: we first localize the problem at a boundary point lying on $\partial\Omega$; that is, we consider problem \eqref{eq2}. We can imagine, up to a rotation, that the portion of the boundary $\partial\Omega\cap B_r$ where boundary conditions are prescribed can be described as the graph of a function $\varphi$ and the portion of the domain where the equation is satisfied $\Omega\cap B_r$ as its epigraph. In other words, we rewrite \eqref{eq2} as
\begin{equation}\label{boundABC}
\begin{cases}
-\mathrm{div}\left(A\nabla u\right)=f+\mathrm{div}F+Vu+b\cdot\nabla u &\mathrm{in \ } B_r\cap\{x_n>\varphi(x')\}\\
u=g \quad \mathrm{or} \quad A\nabla u\cdot\nu=h  &\mathrm{on \ } B_r\cap\{x_n=\varphi(x')\},
\end{cases}
\end{equation}
with $\varphi\in C^1(B'_r)$, $\varphi(0)=0$ and where $x=(x',x_n)\in\R^{n-1}\times\R$, $B'_r=B_r\cap\{x_n=0\}$. Hence, regularity of the boundary $\partial\Omega$ must be understood as regularity of the function $\varphi$. We would like here to leave the boundary free to enjoy a regularity condition with variable exponent $\partial\Omega\in C^{k,\alpha'(\cdot)}$ (in \cite{BieGor1} the boundary satisfies a classic regularity condition $\partial\Omega\in C^{k,\overline\alpha}$ with $\overline\alpha=\sup\alpha$). We set here another notation: from now on, we denote a variable exponent which depends only on the first $n-1$ variables by $\alpha'$, and
\begin{equation}\label{tildealpha}
\tilde\alpha(x',x_n)=\alpha'(x')
\end{equation}
stands for the constant extension in the $n^{\mathrm{th}}$-variable of the exponent $\alpha'$. Then, we define a local diffeomorphism in order to straighten the boundary of $\Omega$. We end up with a problem on a half ball
\begin{equation}\label{eq1}
\begin{cases}
-\mathrm{div}\left(A\nabla u\right)=f+\mathrm{div}F+Vu+b\cdot\nabla u &\mathrm{in \ } B_r^+=B_r\cap\{x_n>0\}\\
u=g \quad\mathrm{or}\quad A\nabla u\cdot\nu=h &\mathrm{in \ } B'_r,
\end{cases}
\end{equation}
where the new solution, right hand sides and boundary data are related to the original ones by composition with the diffeomorphism and the new variable coefficients matrix is a suitable product of the old one with Jacobian matrixes then composed with the diffeomorphism too. We show that the problem does not change qualitatively after the diffeomorphism, i.e. the belonging to variable exponent spaces is preserved in an appropriate sense. Hence, we regularize problem \eqref{eq1}, by smoothing coefficients, forcing terms and boundary data.
Then, the regularized problems enjoy local boundary regularity estimates by classical results, and we prove by a contradiction argument, which relies on a blow-up procedure and a Liouville type theorem, that the constants in the estimates are uniform with respect to the parameter of regularization. Hence, we construct a scheme of approximation which brings the same estimates to any weak solution of \eqref{eq1} and ensures sharp regularity; that is, we prove the following results, which are H\"older and gradient estimates up to the flat boundary.

\begin{Theorem}\label{teo1}
Let $r>0$, $p,q,m_1,m_2\in\mathcal{P}^{\log}(B_r^+)$ with $\underline p,\underline m_1>\frac{n}{2}$, $\underline q,\underline m_2>n$. Let $s\in\mathcal{P}^{\log}(B'_r)$ with $\underline s>n-1$. Let $\alpha\in \mathcal{A}^{\log}(B_r^+)$ with
\begin{equation}\label{alpha1}
\alpha(x)\leq \min\left\{2-\frac{n}{p(x)},2-\frac{n}{m_1(x)},1-\frac{n}{q(x)}\right\}, \qquad \alpha(x',0)\leq 1-\frac{n-1}{s(x')},
\end{equation}
and $A\in C(B_r^+\cup B'_r)$. Then there exists a positive constant (depending on $r,p,q,m_1,m_2,\alpha,n$ and $\|V\|_{L^{m_1(\cdot)}},\|b\|_{L^{m_2(\cdot)}}$) such that for weak solutions to \eqref{eq1} holds
\begin{equation}\label{C0alphaest}
\|u\|_{C^{0,\alpha(\cdot)}(B_{r/2}^+)}\leq c\left(\|u\|_{L^\infty(B_r^+)}+\|f\|_{L^{p(\cdot)}(B_r^+)}+\|F\|_{L^{q(\cdot)}(B_r^+)}+\begin{cases}\|g\|_{C^{0,\alpha(\cdot)}(B'_r)}\quad\mathrm{or}\\
\|h\|_{L^{s(\cdot)}(B'_r)}\end{cases}\right).
\end{equation}
\end{Theorem}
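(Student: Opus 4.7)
The plan is to implement the blueprint sketched in the introduction: regularize by mollification the data $(A,f,F,V,b,g,h)$, so that the approximated solutions $u_\varepsilon$ enjoy classical $C^{0,\overline\alpha}$ boundary Schauder estimates and in particular belong \emph{a priori} to $C^{0,\alpha(\cdot)}(B_{r/2}^+)$. The heart of the matter is to prove \eqref{C0alphaest} for $u_\varepsilon$ with a constant independent of $\varepsilon$. Once this is done, standard stability estimates for the difference $u-u_\varepsilon$ and lower semicontinuity of the variable H\"older seminorm under locally uniform convergence pass the bound to every weak solution of \eqref{eq1}.

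For the uniform estimate I would argue by contradiction. Normalize the right hand side of \eqref{C0alphaest} to $1$ and suppose there exist data and solutions $u_k$ with $M_k:=[u_k]_{C^{0,\alpha(\cdot)}(B_{r/2}^+)}\to+\infty$. Pick $(x_k,y_k)$ nearly realizing the supremum, set $r_k=|x_k-y_k|$ (which must tend to $0$, else the $L^\infty$ bound on $u_k$ closes the argument), and define the rescaled functions
\begin{equation*}
v_k(z)=\frac{u_k(x_k+r_k z)-u_k(x_k)}{M_k\,r_k^{\alpha(x_k)}}\qquad\text{on }\Omega_k:=r_k^{-1}(\Omega-x_k).
\end{equation*}
The $\log$-H\"older assumption on $\alpha,p,q,m_i,s$ is essential to freeze all exponents at the blow-up point: $\alpha_k(z):=\alpha(x_k+r_k z)$ converges locally uniformly to a constant $\alpha_\infty$, and likewise for $p_k,q_k,m_{i,k},s_k$. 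The sharp thresholds \eqref{alpha1} are exactly the scale invariances needed to ensure that the rescaled right-hand sides, lower order terms, and boundary datum all decay as $r_k\to 0$: for example the forcing $f_k$ is rescaled in $L^{p_k(\cdot)}$ by a factor $r_k^{2-\alpha(x_k)-n/p_k(x_k)}$, which vanishes since the $\omega(r_k)$-correction produced by $\log$-H\"older oscillation is absorbed by $r_k^{\varepsilon}$ gains coming from the strict side of \eqref{alpha1}; the same accounting disposes of $F$, $Vu$, $b\cdot\nabla u$, and either $g$ or $h$.

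Along a subsequence, either $\mathrm{dist}(x_k,\partial\Omega)/r_k\to+\infty$ (interior blow-up, limit domain $\R^n$) or stays bounded (boundary blow-up, limit domain a half-space). Elliptic compactness and Ascoli--Arzel\`a yield $v_k\to v_\infty$ locally uniformly, where $v_\infty$ solves a constant-coefficient homogeneous equation, satisfies either homogeneous Dirichlet or homogeneous Neumann data on $\{z_n=0\}$ in the boundary case, obeys the global growth bound $|v_\infty(z)|\le |z|^{\alpha_\infty}$ inherited from the rescaled seminorm, and keeps the non-triviality $|v_\infty(\omega_\infty)|\ge 1/2$ for some unit vector $\omega_\infty$. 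A Liouville-type theorem for solutions with sub-linear growth (here $\alpha_\infty<1$), in the whole space or half-space with the respective boundary condition, forces $v_\infty$ to be constant, contradicting the non-triviality.

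The main obstacle I expect is the bookkeeping in the blow-up: matching the logarithmic $\omega(r_k)$ correction from $\log$-H\"older oscillation of the exponents against the polynomial $r_k^\varepsilon$ gains from the strict thresholds \eqref{alpha1}, uniformly across the interior/boundary dichotomy and across the Dirichlet/Neumann alternative, so that every lower order rescaled term vanishes in the limit while only the constant-coefficient principal part survives. This, together with the corresponding Liouville classification on the half-space with Dirichlet or Neumann data, is the delicate point where the variable exponent nature genuinely enters; the rest is a packaging of mollification, stability, and classical Schauder theory.
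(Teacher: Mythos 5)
Your blueprint is essentially the one the paper follows: mollify the data, prove a uniform H\"older estimate for the regularized solutions by a contradiction--blow-up argument culminating in a Liouville theorem for entire harmonic functions, and pass the estimate to general weak solutions by an approximation scheme. But the mechanism you give for the vanishing of the rescaled lower-order terms is incorrect in a way that would make the step fail if executed literally. You say the factor $r_k^{2-\alpha(x_k)-n/p_k(x_k)}$ ``vanishes since the $\omega(r_k)$-correction \ldots is absorbed by $r_k^\varepsilon$ gains coming from the strict side of \eqref{alpha1}.'' But the admissible exponents in \eqref{alpha1} satisfy $\alpha(x)\le 2-n/p(x)$, not a strict inequality, so in the borderline case $2-\alpha(x_k)-n/p_k(x_k)=0$ and there is no $r_k^\varepsilon$ gain to absorb anything. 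The correct accounting, as in \eqref{L1loc} of the paper, is that these scaling factors are merely \emph{bounded} (the first by non-strictness plus $r_k<1$, the log-H\"older correction by $\omega(r_k)|\log r_k|\le c$), and the vanishing of the whole term is produced by the normalization $1/L_k\to 0$, which multiplies everything. The sharpness of \eqref{alpha1} is precisely what makes the scaling factor bounded rather than vanishing, so you cannot lean on it for decay.

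A second omission is the localization. You blow up directly on $[u_k]_{C^{0,\alpha(\cdot)}(B_{r/2}^+)}$ with base points $x_k$, but nothing then prevents the realizing pairs from escaping to the boundary $\partial B_{r/2}$, where the rescaled domain can degenerate in ways the half-space/whole-space dichotomy does not capture. The paper instead maximizes the seminorm of $\eta u_k$ over the full $B_r^+$, with $\eta$ a cutoff vanishing on $\partial^+ B_r^+$ whose Lipschitz bound $\eta(z)\le \ell\,\mathrm{dist}(z,\partial B)$ forces $\mathrm{dist}(z_k,\partial^+B^+)/r_k\to\infty$, and moreover chooses the blow-up center $\hat z_k$ as $z_k$ or its flat-boundary projection $(z_k',0)$ so that the limit domain is exactly $\R^n$ or $\{x_n>0\}$; a companion sequence $w_k$ with $\eta$ frozen at $\hat z_k$ is what actually carries the rescaled PDE. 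These are not decorations: without them the compactness and limit-domain steps you invoke do not go through. Finally, the approximation scheme is not as ``standard'' as you suggest, especially for Dirichlet data: the paper has to build a sequence of traces $\Phi_k$ on $\partial B_r^+$ that agree with the original trace off $B_r'$ and interpolate to the mollified $g$, establish a uniform $H^{1/2}$ bound on $\Phi_k$ (Remark \ref{claim}), use Lax--Milgram for existence and uniqueness of the perturbed problems, and then pass to the limit by weak $H^1$ compactness and Vitali; locally uniform convergence plus lower semicontinuity of the seminorm only finish the argument once all of this is in place.
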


\begin{Theorem}\label{teo2}
Let $r>0$, $p,m_1,m_2\in\mathcal{P}^{\log}(B_r^+)$ with $\underline p,\underline m_1,\underline m_2>n$. Let $\alpha\in \mathcal{A}^{\log}(B_r^+)$ with
\begin{equation}\label{alpha2}
\alpha(x)\leq \min\left\{1-\frac{n}{p(x)},1-\frac{n}{m_1(x)},1-\frac{n}{m_2(x)}\right\}
\end{equation}
and $A\in C^{0,\alpha(\cdot)}(B_r^+)$. Then there exists a positive constant (depending on $r,p,m_1,m_2,\alpha,n$ and $\|V\|_{L^{m_1(\cdot)}},\|b\|_{L^{m_2(\cdot)}}$) such that for weak solutions to \eqref{eq1} holds
\begin{equation}\label{C1alphaest}
\|u\|_{C^{1,\alpha(\cdot)}(B_{r/2}^+)}\leq c\left(\|u\|_{L^\infty(B_r^+)}+\|f\|_{L^{p(\cdot)}(B_r^+)}+\|F\|_{C^{0,\alpha(\cdot)}(B_r^+)}+\begin{cases}\|g\|_{C^{1,\alpha(\cdot)}(B'_r)}\quad\mathrm{or}\\
\|h\|_{C^{0,\alpha(\cdot)}(B'_r)}\end{cases}\right).
\end{equation}
\end{Theorem}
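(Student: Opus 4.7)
The plan is to follow the general blow-up and approximation strategy announced in the introduction, adapted to the gradient level. The argument has three stages: regularize, prove uniform estimates for the regularized problems by a contradiction/blow-up argument with a Liouville-type limit, and then pass to the limit on the original weak solution. In the regularization stage, I would mollify $A$, $f$, $F$, $V$, $b$ and the boundary datum $g$ or $h$ to obtain a family of problems whose solutions $u_\varepsilon$ are classical and enjoy the usual constant-exponent Schauder estimates up to the flat boundary $B'_r$ with exponent $\overline\alpha$. Those classical estimates are qualitative (the constant depends on the $C^{0,\overline\alpha}$ norms of the smoothed data and blows up as $\varepsilon\to 0$), so the heart of the proof is to upgrade them to the variable-exponent bound \eqref{C1alphaest} with a constant controlled only by the variable-exponent norms of the original data.

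The contradiction step goes as follows. Assume that \eqref{C1alphaest} fails: there exist sequences $A_k$, $u_k$, $f_k$, $F_k$, $V_k$, $b_k$, and $g_k$ or $h_k$ satisfying the assumptions uniformly (in particular the bounds \eqref{alpha2} and $\underline p,\underline m_1,\underline m_2>n$), with the right-hand side of \eqref{C1alphaest} equal to $1$ but $M_k:=[u_k]_{C^{1,\alpha(\cdot)}(B_{r/2}^+)}\to+\infty$. Pick points $x_k,y_k\in B_{r/2}^+$ at which this seminorm is almost attained, set $\rho_k=|x_k-y_k|$, and define the rescaled function
\begin{equation*}
v_k(z)=\frac{u_k(x_k+\rho_k z)-u_k(x_k)-\rho_k\nabla u_k(x_k)\cdot z}{M_k\,\rho_k^{1+\alpha(x_k)}}.
\end{equation*}
By construction $v_k(0)=0$, $\nabla v_k(0)=0$, and the rescaled $C^{1,\alpha_k(\cdot)}$ seminorm with $\alpha_k(z)=\alpha(x_k+\rho_k z)$ is $\leq 1$, attained (up to a factor) at $z=0$ and at $(y_k-x_k)/\rho_k$. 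The log-H\"older continuity of $\alpha$ enters crucially here: it guarantees $\rho_k^{\alpha(x_k+\rho_k z)-\alpha(x_k)}\to 1$ uniformly on compacts, so the variable-exponent seminorm on the rescaled scale is comparable to a constant-exponent one with exponent $\alpha_\infty=\lim\alpha(x_k)$.

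The rescaled equation for $v_k$ has coefficients $A_k(x_k+\rho_k z)$ (converging uniformly on compacta to a constant symmetric matrix by continuity of $A_k$ and the a priori uniform modulus), while the rescaled right-hand sides pick up factors $\rho_k^{1-n/p(\cdot)-\alpha(\cdot)}$, $\rho_k^{-\alpha(\cdot)}\cdot[F_k]_{C^{0,\alpha(\cdot)}}$, $\rho_k^{2-n/m_1(\cdot)-\alpha(\cdot)}$ and $\rho_k^{1-n/m_2(\cdot)-\alpha(\cdot)}$; these all vanish in the limit because of \eqref{alpha2}, $\underline p,\underline m_1,\underline m_2>n$ and $M_k\to\infty$. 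A similar scaling with \eqref{alpha2} shows that the boundary datum for $v_k$ degenerates to the homogeneous Dirichlet or Neumann one on the flat boundary, which stays flat since $\partial B_r^+$ is already a hyperplane. Depending on whether $\mathrm{dist}(x_k,\partial B_r^+)/\rho_k$ stays bounded or diverges, Ascoli--Arzel\`a gives $v_k\to v_\infty$ in $C^1_{\loc}$ to an entire solution of a constant-coefficient homogeneous divergence-form equation on $\R^n$ or on a half-space with homogeneous Dirichlet or Neumann data on its flat boundary. Since $v_\infty$ has growth at most $|z|^{1+\alpha_\infty}$ with $\alpha_\infty<1$, a standard Liouville theorem forces $v_\infty$ to be affine, hence zero by the normalization $v_\infty(0)=0$, $\nabla v_\infty(0)=0$; but the normalization also forces the rescaled seminorm to be $1$ in the limit, a contradiction.

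Finally, the uniform bound obtained for the regularized solutions is transferred to the weak solution of \eqref{eq1} by the approximation scheme: solve the regularized problems with the same data and show that $u_\varepsilon\to u$ (e.g.\ in $L^2$ and weakly in $H^1$ by standard energy estimates), and pass to the limit in \eqref{C1alphaest}. The main obstacle is stage two: one must simultaneously match scalings so that every forcing term vanishes in the blow-up, control the shift of the variable exponent via log-H\"older continuity so that the seminorms remain finite and lower semicontinuous in the limit, and apply a Liouville theorem in the Neumann half-space case (which can be reduced to the Dirichlet/entire case by even reflection using that the limit coefficient matrix is constant and symmetric).
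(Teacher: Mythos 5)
Your high-level strategy is the same as the paper's (regularize, blow-up by contradiction and apply a Liouville theorem, transfer back by approximation), and you correctly identify the two conceptual pillars: log-H\"older continuity to control the exponent shift $\rho_k^{\alpha(x_k+\rho_k z)-\alpha(x_k)}$, and a Liouville theorem for the constant-coefficient limit on $\R^n$ or a half-space. However, there are two genuine gaps in the blow-up step.

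First, you keep the variable coefficients $A(x_k+\rho_k z)$ on the left-hand side of the rescaled equation and claim the right-hand side only picks up the scaling factors from $f,F,V,b$. But because you subtract the affine term $\rho_k\nabla u_k(x_k)\cdot z$ and the coefficients are \emph{not} constant, the weak formulation for $v_k$ contains an extra forcing term proportional to
\begin{equation*}
\frac{1}{M_k\,\rho_k^{\alpha(x_k)}}\,\mathrm{div}\bigl(\bigl[A(x_k+\rho_k z)-A(x_k)\bigr]\nabla u_k(x_k)\bigr).
\end{equation*}
Since $|A(x_k+\rho_k z)-A(x_k)|\lesssim \rho_k^{\alpha(x_k)}|z|^{\alpha(x_k)}$, this term is of order $|\nabla u_k(x_k)|/M_k$ and does \emph{not} vanish without further input. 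The paper makes it vanish by first establishing a uniform constant-exponent $C^{1,\underline\alpha}$ bound for the regularized solutions (their Remark~2.8, using $L^{p(\cdot)}\hookrightarrow L^{\underline p}$, $C^{0,\alpha(\cdot)}\hookrightarrow C^{0,\underline\alpha}$ and classical Schauder theory), which gives $\|\nabla u_k\|_{L^\infty}\leq c$ uniformly, so the extra term is $O(1/M_k)\to 0$. You never introduce this auxiliary bound, and your argument for the vanishing of the right-hand side is therefore incomplete (the same uniform gradient bound is also what makes your drift term $\rho_k\, b(x_k+\rho_k z)\cdot\nabla v_k$ vanish in $L^1_{\loc}$, since you need a pointwise bound for $\nabla v_k$ beyond what normalization gives).

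Second, you take the near-maximizing pair $x_k,y_k$ directly in $B_{r/2}^+$, so the rescaled $C^{1,\alpha(\cdot)}$ bound holds only on $(B_{r/2}^+-x_k)/\rho_k$. If the pair accumulates on the \emph{curved} part $\partial^+B_{r/2}^+$, the limit domain you control is a half-space whose boundary is the rescaled sphere, not the flat part $\{x_n=0\}$; there is no boundary condition there and no a priori growth bound outside it, so Liouville does not apply as stated. The paper avoids this by maximizing the seminorm of a cut-off function $\eta u_k$, with $\eta$ vanishing on $\partial^+B_{3r/4}^+$; this forces $\mathrm{dist}(z_k,\partial^+B^+)/r_k\to+\infty$ and guarantees the only possible half-space limit is $\{x_n>0\}$. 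Some such localization (or an equivalent interior--boundary weighting) is needed to close your argument. The remaining parts of your outline --- the dichotomy on $x_{k,n}/\rho_k$, the vanishing of the boundary datum contribution, the square-root reduction to the Laplacian followed by odd/even reflection, and the use of weak/strong limits to transfer the estimate back to the original solution --- are consistent with the paper's proof.
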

\begin{remark}
Let us stress the fact that the estimates stated above are truly boundary estimates in the sense that the structure of the variable coefficients matrix $A$ obtained after the diffeomorphism does not allow in general, even in the case of homogeneous boundary conditions, a standard even or odd reflection of the equation across $\{x_n=0\}$ which preserves continuity of coefficients. In other words, the hyperplane $\{x_n=0\}$ is not invariant with respect to $A$ when $x_n=0$.
\end{remark}

Then, we show that the information obtained on the straightened problem immediately translates into boundary regularity for the original curved problem.

\begin{Corollary}\label{cor1}
Let $r>0$, $\alpha,\tilde\alpha\in \mathcal{A}^{\log}(B_r\cap\Omega)$ and $u$ be a weak solution to \eqref{eq2}. Then
\begin{itemize}
\item[i)] if $p,q,m_1,m_2\in \mathcal{P}^{\log}(B_r\cap\Omega)$ with $\underline p,\underline m_1>\frac{n}{2}$, $\underline q,\underline m_2>n$, $s\in \mathcal{P}^{\log}(B_r\cap\partial\Omega)$ with $\underline s>n-1$, $\alpha$ satisfies
\begin{equation*}\label{alpha3}
\alpha(x)\leq \min\left\{2-\frac{n}{p(x)},2-\frac{n}{m_1(x)},1-\frac{n}{q(x)}\right\} \quad\mathrm{in \ }B_r\cap\Omega,
\end{equation*}
\begin{equation*}\label{alpha31}
\alpha(x)\leq 1-\frac{n-1}{s(x)}\quad\mathrm{in \ }B_r\cap\partial\Omega,
\end{equation*}
$\partial\Omega\in C^{1}$, $A\in C(B_r\cap\overline\Omega)$, $f\in L^{p(\cdot)}(B_r\cap\Omega)$, $F\in L^{q(\cdot)}(B_r\cap\Omega)$, $V\in L^{m_1(\cdot)}(B_r\cap\Omega)$, $b\in L^{m_2(\cdot)}(B_r\cap\Omega)$ and $g\in C^{0,\alpha(\cdot)}(B_r\cap\partial\Omega)$ or $h\in L^{s(\cdot)}(B_r\cap\partial\Omega)$, then $u\in C^{0,\alpha(\cdot)}_{\mathrm{loc}}(B_r\cap\overline\Omega)$;
\item[ii)] if $p,m_1,m_2\in \mathcal{P}^{\log}(B_r\cap\Omega)$ with $\underline p,\underline m_1,\underline m_2>n$, $\alpha$ satisfies
\begin{equation*}\label{alpha4}
\alpha(x)\leq \min\left\{1-\frac{n}{p(x)},1-\frac{n}{m_1(x)},1-\frac{n}{m_2(x)}\right\}\quad\mathrm{in \ }B_r\cap\Omega,
\end{equation*}
$\partial\Omega\in C^{1,\alpha'(\cdot)}$, $\tilde\alpha$ satisfies \eqref{tildealpha}, $\gamma=\min\{\alpha,\tilde\alpha\}$, $A\in C^{0,\alpha(\cdot)}(B_r\cap\Omega)$, $f\in L^{p(\cdot)}(B_r\cap\Omega)$, $F\in C^{0,\alpha(\cdot)}(B_r\cap\Omega)$, $V\in L^{m_1(\cdot)}(B_r\cap\Omega)$, $b\in L^{m_2(\cdot)}(B_r\cap\Omega)$ and $g\in C^{1,\alpha(\cdot)}(B_r\cap\partial\Omega)$ or $h\in C^{0,\alpha(\cdot)}(B_r\cap\partial\Omega)$, then $u\in C^{1,\gamma(\cdot)}_{\mathrm{loc}}(B_r\cap\overline\Omega)$.
\end{itemize}
\end{Corollary}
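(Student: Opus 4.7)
The plan is to reduce both parts of the corollary to Theorems~\ref{teo1} and \ref{teo2} through a local straightening of the boundary. Near the boundary point $0\in\partial\Omega$, after a rotation of coordinates, I would write $\partial\Omega\cap B_\rho=\{x_n=\varphi(x')\}$ and $\Omega\cap B_\rho=\{x_n>\varphi(x')\}$ for some $\rho\leq r$ and $\varphi\in C^1(B'_\rho)$ in case (i), $\varphi\in C^{1,\alpha'(\cdot)}(B'_\rho)$ in case (ii), with $\varphi(0)=0$, so that \eqref{eq2} takes the form \eqref{boundABC}. Introduce the diffeomorphism $\Phi(x',x_n)=(x',x_n-\varphi(x'))$, whose Jacobian is lower triangular with unit determinant, and set $\tilde u=u\circ\Phi^{-1}$. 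A change of variables in the weak formulation recasts the problem into one of the form \eqref{eq1} on the half-ball $B_\rho^+$ with flat face $B'_\rho$, with new coefficient matrix $\tilde A=(D\Phi\,A\,D\Phi^T)\circ\Phi^{-1}$, new interior data $\tilde f=f\circ\Phi^{-1}$, $\tilde V=V\circ\Phi^{-1}$, $\tilde F=(D\Phi\,F)\circ\Phi^{-1}$, $\tilde b=(D\Phi\,b)\circ\Phi^{-1}$, and boundary data $\tilde g$, $\tilde h$ obtained by composing $g$, $h$ with the graph map $y'\mapsto(y',\varphi(y'))$ (and, for $\tilde h$, absorbing the surface-measure Jacobian). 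Uniform ellipticity of $\tilde A$ is preserved because $D\Phi$ has bounded inverse.

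The next step is to verify that the hypotheses of the flat-boundary theorems are inherited by the straightened problem. Since $\Phi^{\pm 1}$ are bi-Lipschitz with constants controlled by $1+\|\nabla\varphi\|_{L^\infty}$, the composed exponents $p\circ\Phi^{-1}$, $\alpha\circ\Phi^{-1}$, etc., remain in $\mathcal{P}^{\log}$ or $\mathcal{A}^{\log}$ with the same lower and upper bounds and a comparable $\log$-Hölder modulus, and the pointwise constraints \eqref{alpha1}--\eqref{alpha2} transfer unchanged. A modular computation using $|\det D\Phi|=1$ gives equivalence of the $L^{p(\cdot)}$, $C^{0,\alpha(\cdot)}$ and $C^{1,\alpha(\cdot)}$ norms under $u\mapsto u\circ\Phi^{\pm 1}$, so all quantitative data assumptions transfer up to harmless multiplicative constants. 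For part (i) it suffices that $\tilde A\in C(B_\rho^+\cup B'_\rho)$, which follows from continuity of $A$ and of $\nabla\varphi$; Theorem~\ref{teo1} applied to $\tilde u$ on $B_{\rho/2}^+$ and composition with $\Phi$ then yield the conclusion.

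For part (ii) the key observation is that the entries of $D\Phi$ involve $\partial_i\varphi\in C^{0,\alpha'(\cdot)}(B'_\rho)$, which extended trivially in $x_n$ lies in $C^{0,\tilde\alpha(\cdot)}(B_\rho)$, whereas $A\circ\Phi^{-1}\in C^{0,\alpha\circ\Phi^{-1}(\cdot)}(B_\rho^+)$. Since pointwise products in variable-exponent Hölder spaces belong to the space associated with the $\min$ of the two exponents, $\tilde A$ lies in $C^{0,\gamma(\cdot)}(B_\rho^+)$ with $\gamma=\min\{\alpha,\tilde\alpha\}$ (once exponents are pulled back along $\Phi$); the same argument places $\tilde g\in C^{1,\gamma(\cdot)}(B'_\rho)$, $\tilde h\in C^{0,\gamma(\cdot)}(B'_\rho)$ and $\tilde F\in C^{0,\gamma(\cdot)}(B_\rho^+)$. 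Because $\gamma\leq\alpha$, the bounds \eqref{alpha2} still hold with $\gamma$ in place of $\alpha$, so Theorem~\ref{teo2} delivers $\tilde u\in C^{1,\gamma(\cdot)}(B_{\rho/2}^+)$, and composing with $\Phi$ gives $u\in C^{1,\gamma(\cdot)}_{\mathrm{loc}}(B_r\cap\overline\Omega)$ near $0$. I expect the only real obstacle to be the cumulative but routine bookkeeping required to show that the function-space hypotheses truly transfer through $\Phi^{\pm 1}$ with quantitative control and that the product rule for variable-exponent Hölder spaces respects the $\min$ of exponents; once these facts are packaged into a preliminary lemma, the corollary is a direct application of Theorems~\ref{teo1}--\ref{teo2}, combined with standard interior regularity away from $\partial\Omega$.
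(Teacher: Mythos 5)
Your proof is correct and follows essentially the same route as the paper: straighten the boundary with a bi-Lipschitz, unit-Jacobian diffeomorphism, verify that $\log$-H\"older exponents, Lebesgue/H\"older memberships, and the pointwise constraints on $\alpha$ pass through composition, observe that the transformed coefficient matrix (a product involving the Jacobian of the straightening map) lands in the right variable-exponent H\"older class, apply Theorems~\ref{teo1}--\ref{teo2} on the half-ball, and compose back. The preliminary facts you flag as needing ``a preliminary lemma'' are precisely the three lemmas the paper isolates in Section~\ref{sectdiffeo} (the product rule in $C^{k,\min\{\alpha,\beta\}(\cdot)}$, stability of $\mathcal{A}^{\log}$ under pointwise $\min$, and invariance of $L^{p(\cdot)}$/$C^{0,\alpha(\cdot)}$ under bi-Lipschitz composition), and your identification of the gradient as a product of a $C^{0,\gamma(\cdot)}$ and a $C^{0,\tilde\alpha(\cdot)}$ factor in part~(ii) is exactly the paper's closing step.
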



Eventually, by an inductive argument, the gradient estimate in Theorem \ref{teo2} implies the following Schauder regularity for partial derivatives of solutions of any order $k\in\mathbb{N}$.


\begin{Corollary}\label{cor3}
Let $r>0$, $k\geq0$, $\alpha,\tilde\alpha\in \mathcal{A}^{\log}(B_r\cap\Omega)$, and $u$ be a weak solution to

\begin{equation*}
\begin{cases}
-\mathrm{div}\left(A\nabla u\right)=\mathrm{div}F &\mathrm{in \ } B_r\cap\Omega\\
u=g\quad\mathrm{or}\quad A\nabla u\cdot\nu=h &\mathrm{on \ } B_r\cap\partial\Omega,
\end{cases}
\end{equation*}
with $A,F\in C^{k,\alpha(\cdot)}(B_r\cap\Omega)$, $g\in C^{k+1,\alpha(\cdot)}(B_r\cap\partial\Omega)$ or $h\in C^{k,\alpha(\cdot)}(B_r\cap\partial\Omega)$, $\partial\Omega\in C^{k+1,\alpha'(\cdot)}$, $\tilde\alpha$ satisfies \eqref{tildealpha}, $\gamma=\min\{\alpha,\tilde\alpha\}$. Then $u\in C^{k+1,\gamma(\cdot)}_{\mathrm{loc}}(B_r\cap\overline\Omega)$.
\end{Corollary}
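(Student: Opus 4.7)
The proof proceeds by induction on $k$, with the base case $k=0$ being precisely part (ii) of Corollary \ref{cor1} applied with $V=b=f=0$, which yields $u\in C^{1,\gamma(\cdot)}_{\mathrm{loc}}(B_r\cap\overline\Omega)$. For the inductive step, assume the statement has been proved up to order $k-1$ and let data of order $k$ be given. I would first flatten the boundary near the point of interest via a local $C^{k+1,\alpha'(\cdot)}$-diffeomorphism as described around \eqref{eq1}. Since the transformed matrix, forcing, and boundary data involve the originals composed with a change of variables of regularity $C^{k+1,\alpha'(\cdot)}$ and multiplied by Jacobian entries, the new problem on $B_\rho^+$ has $\tilde A,\tilde F\in C^{k,\gamma(\cdot)}$ and boundary datum in $C^{k+1,\gamma(\cdot)}$ (Dirichlet) or $C^{k,\gamma(\cdot)}$ (Neumann), with $\gamma=\min\{\alpha,\tilde\alpha\}$. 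This is the single place where the minimum exponent appears.

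Next, apply the inductive hypothesis directly to $u$ to obtain $u\in C^{k,\gamma(\cdot)}(B_{\rho/2}^+)$. For each tangential direction $j\in\{1,\dots,n-1\}$, the function $w=\partial_j u$ is a weak solution of
\begin{equation*}
-\mathrm{div}(\tilde A\nabla w)=\mathrm{div}\bigl(\partial_j \tilde F+(\partial_j \tilde A)\nabla u\bigr)\qquad\text{in }B_{\rho/2}^+,
\end{equation*}
with boundary condition $w=\partial_j g\in C^{k,\gamma(\cdot)}$ or $\tilde A\nabla w\cdot e_n=\partial_j h-(\partial_j\tilde A)\nabla u\cdot e_n\in C^{k-1,\gamma(\cdot)}$ on $B'_{\rho/2}$. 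The new forcing belongs to $C^{k-1,\gamma(\cdot)}$ because $\partial_j\tilde F\in C^{k-1,\gamma(\cdot)}$ and the product $(\partial_j\tilde A)\nabla u$ is too. The tangential differentiation is made rigorous by difference quotients in $e_j$, which preserve the flat boundary. The inductive hypothesis applied to $w$ then yields $\partial_j u\in C^{k,\gamma(\cdot)}(B_{\rho/4}^+)$ for every tangential $j$, so every derivative $D^\beta u$ with $|\beta|\leq k+1$ and $\beta_n\leq 1$ belongs to $C^{0,\gamma(\cdot)}$.

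The remaining derivatives $D^\beta u$ with $|\beta|=k+1$ and $\beta_n\geq 2$ are handled through the equation. Rewriting the PDE in non-divergence form and using uniform ellipticity $\tilde a_{nn}\geq\lambda>0$, one solves
\begin{equation*}
\partial_{nn} u=\frac{1}{\tilde a_{nn}}\Bigl(-\sum_{(i,j)\neq(n,n)}\tilde a_{ij}\,\partial_{ij}u-(\partial_i\tilde a_{ij})\,\partial_j u-\partial_i\tilde F_i\Bigr),
\end{equation*}
an identity whose right-hand side involves at most one normal derivative of $u$. A secondary induction on $\beta_n$ then closes the argument: to bound $D^\beta u=D^{\beta-2e_n}\partial_{nn}u$ one applies Leibniz to the identity above and observes that each resulting derivative of $u$ has strictly fewer than $\beta_n$ normal components (the index pair $(i,j)\neq(n,n)$ contributes at most one $\partial_n$), hence falls under an earlier stage of the secondary induction or, for zero normal index, under the tangential estimate just proved. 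The coefficients, $1/\tilde a_{nn}$, and $\partial_i\tilde F_i$ carry enough derivatives in $C^{0,\gamma(\cdot)}$ thanks to $\tilde A,\tilde F\in C^{k,\gamma(\cdot)}$.

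The main obstacle is the careful accounting of the regularity balance, most notably at the flattening step (responsible for the exponent $\gamma=\min\{\alpha,\tilde\alpha\}$) and in the secondary induction on $\beta_n$; conceptually the scheme is classical once Theorem \ref{teo2} and Corollary \ref{cor1} are available as black boxes, and no new Liouville-type argument or blow-up is needed beyond those already invoked for the base case.
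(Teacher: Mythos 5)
Your proposal is correct and follows essentially the same route as the paper: flatten once (which is where $\gamma=\min\{\alpha,\tilde\alpha\}$ enters), induct on $k$, apply the inductive hypothesis to tangentially differentiated equations to control all derivatives with at least one tangential index, and recover the remaining pure normal derivative from the equation and the lower bound $a_{nn}\geq\lambda$. The only (cosmetic) difference is that you set up a secondary induction on the number of normal derivatives $\beta_n$, whereas the paper gets by with a single application of the identity for $\partial_{nn}u$, observing directly that its right-hand side is already $C^{k,\gamma(\cdot)}$ thanks to the tangential estimates and the product lemma -- the two bookkeeping schemes amount to the same computation.
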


\begin{remark}
We would like to stress the fact that all the results stated above at boundary points (either on curved domains or half balls with a boundary condition on the flat boundary) continue to hold a fortiori in the interior (on balls inside the domain).
\end{remark}


\section{Uniform H\"older and gradient estimates for regularized problems}
In this section, we apply a local diffeomorphism on \eqref{eq2} and we associate to the new straightened problem a family of regularized equations by smoothening coefficients, forcing terms and boundary data. Hence, we prove H\"older and gradient estimates with variable exponent for the regularized problems which are uniform with respect to the regularization parameter.

\subsection{A diffeomorphism straightening the boundary}\label{sectdiffeo}

Let us consider the following local diffeomorphism
\begin{equation}\label{diffeomorphism}
\psi(x',x_n)=(x',x_n+\varphi(x')),
\end{equation}
Hence, there exists a small enough $R>0$ such that
$$\psi \ : B_R\cap\{x_n>0\}\mapsto B_R\cap\{x_n>\varphi(x')\},$$
mapping the boundary $B_R\cap\{x_n=0\}$ into the boundary $B_R\cap\{x_n=\varphi(x')\}$.
The Jacobian associated with $\psi$ is given by
\begin{align*}
J_\psi(x')=\left(\begin{array}{c|c}
\mathbb{I}_{n-1}&{\mathbf 0}\\\hline
(\nabla \varphi(x'))^T&1
\end{array}\right), \qquad \mathrm{with}\quad |\mathrm{det} \, J_\psi(x')|\equiv1.
\end{align*}
Up to a possible dilation, one can translate the study of \eqref{boundABC} into the study of the following problem for $v=u\circ\psi$
\begin{equation*}\label{boundM}
\begin{cases}
-\mathrm{div}\left(M\nabla v\right)=\tilde f+\mathrm{div}\tilde F+\tilde Vv+\tilde b\cdot\nabla v &\mathrm{in \ }B_r^+= B_r\cap\{x_n>0\}\\
v=\tilde g \quad\mathrm{or}\quad M\nabla v\cdot\nu=\tilde h&\mathrm{on \ }B'_r=B_r\cap\{x_n=0\},
\end{cases}
\end{equation*}
where the new matrix $M$ is given by
\begin{equation*}
M=(J_\psi^{-1}) (A \circ\psi) (J_\psi^{-1})^T,
\end{equation*}
and
\begin{equation*}
\tilde f=f\circ\psi,\qquad \tilde F=F\circ\psi, \qquad \tilde V=V\circ\psi\qquad \tilde b=b\circ\psi\qquad\tilde g=g\circ\psi,\qquad \tilde h=h\circ\psi.
\end{equation*}
Now we are going to state some easy preliminary lemmas about product of variable exponent H\"older continuous functions and which show the precise translation of integrability and regularity conditions after the composition with a local diffeomorphism. Although some results have an intersection with \cite[Lemma 4.4]{BieGor1}, for completeness we repeat them here in a version which will be suitable for our purposes.

\begin{Lemma}
Let be $k\in\mathbb N$, $\Omega$ a bounded domain in $\R^n$ and let be $u\in C^{k,\alpha(\cdot)}(\Omega)$ and $v\in C^{k,\beta(\cdot)}(\Omega)$ with $0<\underline\alpha\leq\alpha(x)\leq\overline\alpha\leq1$ and $0<\underline\beta\leq\beta(x)\leq\overline\beta\leq1$. Then the product $uv\in C^{k,\min\{\alpha(\cdot),\beta(\cdot)\}}(\Omega)$.
\end{Lemma}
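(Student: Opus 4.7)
The plan is to reduce the statement to the classical Leibniz-plus-triangle-inequality argument, being careful with two standard maneuvers: the algebraic splitting for products of Hölder functions, and the exploitation of the $|x-y|\leq 1$ condition built into the seminorm \eqref{holderseminorm1} to compare different variable exponents pointwise.

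First I would treat the base case $k=0$. Writing the familiar identity
\begin{equation*}
u(x)v(x)-u(y)v(y)=\bigl(u(x)-u(y)\bigr)v(x)+u(y)\bigl(v(x)-v(y)\bigr),
\end{equation*}
I would bound the two terms by $[u]_{C^{0,\alpha(\cdot)}(\Omega)}\|v\|_{L^\infty(\Omega)}|x-y|^{\alpha(x)}$ and $[v]_{C^{0,\beta(\cdot)}(\Omega)}\|u\|_{L^\infty(\Omega)}|x-y|^{\beta(x)}$ respectively. Setting $\gamma(x)=\min\{\alpha(x),\beta(x)\}$, since $|x-y|\leq 1$ and $\alpha(x),\beta(x)\geq\gamma(x)$, we have $|x-y|^{\alpha(x)},|x-y|^{\beta(x)}\leq|x-y|^{\gamma(x)}$, so dividing by $|x-y|^{\gamma(x)}$ gives a bound on $[uv]_{C^{0,\gamma(\cdot)}(\Omega)}$. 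Combined with $\|uv\|_{L^\infty}\leq\|u\|_{L^\infty}\|v\|_{L^\infty}$, this yields $uv\in C^{0,\gamma(\cdot)}(\Omega)$.

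Next, for $k\geq 1$, I would apply the Leibniz rule to an arbitrary derivative of order $k$:
\begin{equation*}
D^{\beta_k}(uv)=\sum_{\beta\leq\beta_k}\binom{\beta_k}{\beta}D^\beta u\cdot D^{\beta_k-\beta}v,
\end{equation*}
and classify the terms by the order $i=|\beta|$. When $i=0$, the factor $u\in C^{0,\alpha(\cdot)}$ multiplies $D^{\beta_k}v\in C^{0,\beta(\cdot)}$, and the $k=0$ case of the lemma places the product in $C^{0,\gamma(\cdot)}$; the case $i=k$ is symmetric. For intermediate $0<i<k$, both $D^\beta u$ and $D^{\beta_k-\beta}v$ belong to at least $C^1(\Omega)$ with bounded gradients (since $k-i,i\geq 1$ and the higher derivatives are bounded on $\Omega$), hence each factor is Lipschitz, and the product is Lipschitz as well, which is more than enough to lie in $C^{0,\gamma(\cdot)}(\Omega)$. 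Summing finitely many terms, $D^{\beta_k}(uv)\in C^{0,\gamma(\cdot)}(\Omega)$, and adding the bounds for the $L^\infty$ norms of $D^{\beta_i}(uv)$ for $i\leq k$ (again via Leibniz and triangle inequality) concludes $uv\in C^{k,\gamma(\cdot)}(\Omega)$.

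No real obstacle is expected here; the only point that needs care is the variable-exponent step in the base case, where the monotonicity $t\mapsto t^{s}$ in $s$ for $t\in(0,1]$ is what makes the bound $|x-y|^{\max\{\alpha(x),\beta(x)\}}\leq|x-y|^{\gamma(x)}$ work, and this is precisely why the standing normalization $|x-y|\leq 1$ in \eqref{holderseminorm1} was imposed.
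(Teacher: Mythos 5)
Your base case $k=0$ is essentially the paper's, up to using the $|x-y|^{\alpha(x)}$-form of the seminorm in \eqref{holderseminorm1} where the paper juggles $\min$ and $\max$ of $\alpha,\beta$ at $x$ and $y$; the two are equivalent by the symmetry of the supremum. For $k\geq 1$ your route diverges: the paper proceeds by induction, writing $D^{\alpha_{k+1}}(uv)=D^{\alpha_k}(v\,\partial_i u)+D^{\alpha_k}(u\,\partial_i v)$ and applying the inductive hypothesis to the two resulting products, whereas you unfold the entire Leibniz expansion of $D^{\beta_k}(uv)$ at once and sort the summands by the order $i$ of the derivative landing on $u$: the extremes $i=0,k$ are handled by the base case, and the intermediate terms are products of Lipschitz factors that land in $C^{0,\gamma(\cdot)}$ trivially. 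Both arguments lean on the same implicit fact that a $C^1$ function with bounded gradient on $\Omega$ is Lipschitz (automatic on the convex sets where the lemma is applied, not on an arbitrary bounded domain): you need it for the intermediate factors and, implicitly, to place $u$ itself in $C^{0,\alpha(\cdot)}$ and $v$ in $C^{0,\beta(\cdot)}$ in the $i=0$ and $i=k$ cases, while the paper needs it to feed $v\in C^{k,1}$ into its inductive hypothesis. Both approaches are correct; the paper's induction is more economical, while your one-shot Leibniz decomposition makes visible that the variable-exponent Hölder information is only ever used in the top-order products.
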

\proof
Let us procede by induction and let us start with the case $k=0$. Let $x,y\in\Omega$ with $0<|x-y|\leq1$. Then
\begin{eqnarray*}
|uv(x)-uv(y)|&\leq&|v(y)(u(x)-u(y))|+|u(x)(v(x)-v(y))|\\
&\leq&\sup_{x\in\Omega}|v(x)|c_\alpha|x-y|^{\max\{\alpha(x),\alpha(y)\}}+\sup_{x\in\Omega}|u(x)|c_\beta|x-y|^{\max\{\beta(x),\beta(y)\}}\\
&\leq&C|x-y|^{\min\{\max\{\alpha(x),\alpha(y)\},\max\{\beta(x),\beta(y)\}\}}\\
&\leq&C|x-y|^{\max\{\min\{\alpha(x),\beta(x)\},\min\{\alpha(y),\beta(y)\}\}}.
\end{eqnarray*}
Hence
\begin{equation*}
[uv]_{C^{0,\min\{\alpha(\cdot),\beta(\cdot)\}}(\Omega)}=\sup_{\substack{x,y\in\Omega \\ 0<|x-y|\leq1}}\frac{|uv(x)-uv(y)|}{|x-y|^{\max\{\min\{\alpha(x),\beta(x)\},\min\{\alpha(y),\beta(y)\}\}}}<+\infty.
\end{equation*}
Then let us suppose the result true in the generic case $k$ and let us prove the result in case $k+1$. We are assuming that $u\in C^{k,\alpha(\cdot)}(\Omega)$ and $v\in C^{k,\beta(\cdot)}(\Omega)$ implies $uv\in C^{k,\min\{\alpha(\cdot),\beta(\cdot)\}}(\Omega)$. Let us suppose now that $u\in C^{k+1,\alpha(\cdot)}(\Omega)$ and $v\in C^{k+1,\beta(\cdot)}(\Omega)$. Taken a partial derivative $D^{\alpha_{k+1}}(uv)$ for $|\alpha_{k+1}|=k+1$, then there exists $i\in\{1,...,n\}$ and multiindex $\alpha_k$ with $|\alpha_{k}|=k$, such that
$$D^{\alpha_{k+1}}(uv)=D^{\alpha_k}(\partial_i(uv))=D^{\alpha_k}(v\partial_iu)+D^{\alpha_k}(u\partial_iv).$$
By inductive hypothesis $v\partial_iu\in C^{k,\alpha(\cdot)}(\Omega)$ and $u\partial_iv\in C^{k,\beta(\cdot)}(\Omega)$. So, $D^{\alpha_{k+1}}(uv)$ is the sum of two functions belonging respectively to $C^{0,\alpha(\cdot)}(\Omega)$ and $C^{0,\beta(\cdot)}(\Omega)$. By trivial inclusion of spaces, we get that $D^{\alpha_{k+1}}(uv)\in C^{0,\min\{\alpha(\cdot),\beta(\cdot)\}}(\Omega)$ which proves the result.
\endproof

\begin{Lemma}
If $\alpha,\beta\in \mathcal{A}^{\mathrm{log}}(\Omega)$, then the pointwise minimum $\gamma=\min\{\alpha,\beta\}\in \mathcal{A}^{\mathrm{log}}(\Omega)$.
\end{Lemma}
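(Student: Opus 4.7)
The plan is to verify directly the three conditions defining membership in $\mathcal{A}^{\log}(\Omega)$: (i) $\gamma$ is continuous and bounded, (ii) it satisfies $0<\underline\gamma\leq\gamma(x)\leq\overline\gamma<1$, and (iii) it is log-H\"older continuous.

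The bounds are immediate: pointwise $\gamma(x)=\min\{\alpha(x),\beta(x)\}$ lies between $\min\{\underline\alpha,\underline\beta\}>0$ and $\min\{\overline\alpha,\overline\beta\}<1$. Continuity of $\gamma$ follows from continuity of $\alpha$ and $\beta$ together with continuity of the map $(a,b)\mapsto\min\{a,b\}$.

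The key step is the log-H\"older estimate, which rests on the elementary inequality
\begin{equation*}
|\min\{a,b\}-\min\{c,d\}|\leq \max\{|a-c|,|b-d|\}\qquad\text{for all }a,b,c,d\in\R.
\end{equation*}
To see this, assume without loss of generality that $\min\{a,b\}\geq\min\{c,d\}$ and, say, $\min\{c,d\}=c$; then $\min\{a,b\}-c\leq a-c\leq|a-c|$, and the symmetric cases are analogous. Applying this pointwise with $a=\alpha(x)$, $b=\beta(x)$, $c=\alpha(y)$, $d=\beta(y)$, and using the log-H\"older continuity of $\alpha$ and $\beta$ with constants $c_\alpha,c_\beta$, we obtain
\begin{equation*}
|\gamma(x)-\gamma(y)|\leq\max\{|\alpha(x)-\alpha(y)|,|\beta(x)-\beta(y)|\}\leq\frac{\max\{c_\alpha,c_\beta\}}{\log\left(e+\frac{1}{|x-y|}\right)},
\end{equation*}
which is exactly the log-H\"older modulus \eqref{log} for $\gamma$.

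There is no real obstacle here; the only point requiring care is the min-inequality, which is a short case analysis. Combining the three items, $\gamma\in\mathcal{A}^{\log}(\Omega)$. \endproof
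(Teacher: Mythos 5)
Your proof is correct, and it takes a genuinely different (and cleaner) route than the paper's. The paper handles the mixed case $\gamma(x)=\alpha(x)$, $\gamma(y)=\beta(y)$ by appealing to the intermediate value theorem on the segment $[x,y]$: since the one-dimensional restrictions of $\alpha$ and $\beta$ cross at some $z\in[x,y]$, one writes $|\alpha(x)-\beta(y)|\leq|\alpha(x)-\alpha(z)|+|\beta(z)-\beta(y)|$ and uses monotonicity of the modulus $\omega$ to control both terms by $\omega(|x-y|)$. You instead use the purely algebraic inequality
\begin{equation*}
\abs{\min\{a,b\}-\min\{c,d\}}\leq\max\{\abs{a-c},\abs{b-d}\},
\end{equation*}
which bypasses any geometry entirely. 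Your argument is shorter and strictly more general: it does not need the segment $[x,y]$ to lie in $\Omega$ (the paper's IVT step implicitly uses this), nor does it need the monotonicity of $\omega$. The paper's approach, on the other hand, makes the role of the modulus of continuity and its monotonicity explicit, which may be pedagogically useful but is not logically required. Both proofs establish the claim; yours does so with weaker hypotheses on $\Omega$.
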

\proof
Let $x,y\in \Omega$ with $x\neq y$, and let us consider $|\gamma(x)-\gamma(y)|$. If $\gamma(x)=\alpha(x)$ and $\gamma(y)=\alpha(y)$ or $\gamma(x)=\beta(x)$ and $\gamma(y)=\beta(y)$ then we can conclude by $\log$-H\"older continuity of $\alpha$ or $\beta$ that
\begin{equation*}
|\gamma(x)-\gamma(y)|\leq c\frac{1}{\log\left(e+\frac{1}{|x-y|}\right)}.
\end{equation*}
Hence we can assume without loss of generality that $\gamma(x)=\alpha(x)$ and $\gamma(y)=\beta(y)$, which implies that on the segment $[x,y]$ the graphs of the one dimensional restrictions of $\alpha$ and $\beta$ (which are continuous) have to cross each other by the intermediate zero theorem; that is, there exists an intermediate point $z$ such that $\alpha(z)=\beta(z)$ and $\omega(|x-z|),\omega(|y-z|)$ are both less or equal than $\omega(|x-y|)$ by the non decreasing monotonicity of the modulus of continuity $\omega(t)=1/\log(e+1/t)$. Hence,
\begin{equation*}
|\gamma(x)-\gamma(y)|=|\alpha(x)-\beta(y)|\leq|\alpha(x)-\alpha(z)|+|\beta(z)-\beta(y)|\leq c\frac{1}{\log\left(e+\frac{1}{|x-y|}\right)}.
\end{equation*}
\endproof

\begin{Lemma}
Let $\psi:\R^n\to\R^n$ be a local diffeomorphism between $\Omega'=\psi^{-1}\Omega$ and $\Omega$ such that $\psi^{-1}$ is Lipschitz continuous in $\Omega$. Then
\begin{itemize}
\item[$i)$] if $f\in L^{p(\cdot)}(\Omega)$, then $f\circ\psi\in L^{p\circ\psi(\cdot)}(\Omega')$;
\item[$ii)$] if $\psi$ is bi-Lipschitz continuous and $\mathrm{diam}(\Omega),\mathrm{diam}(\Omega')\leq 1$, then $u\in C^{0,\alpha(\cdot)}(\Omega)$ if and only if $u\circ\psi\in C^{0,\alpha\circ\psi(\cdot)}(\Omega')$.
\end{itemize}
\end{Lemma}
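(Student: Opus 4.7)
The plan is to reduce both claims to the (bi-)Lipschitz character of $\psi$: this gives a pointwise bound on the Jacobian of $\psi^{-1}$ (for (i)) and a two-sided control on $|\psi(y_1)-\psi(y_2)|/|y_1-y_2|$ (for (ii)). Neither part requires any special properties of the exponents beyond the standing boundedness assumptions.

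For (i), I would apply the change of variable $x=\psi(y)$ to the modular of $(f\circ\psi)/\lambda$, obtaining
\[
\rho_{\Omega'}^{p\circ\psi}((f\circ\psi)/\lambda) = \int_{\Omega} |f(x)/\lambda|^{p(x)} |\det J_{\psi^{-1}}(x)|\, dx \leq C\,\rho_\Omega^p(f/\lambda),
\]
where $C=\|J_{\psi^{-1}}\|_{L^\infty}^n<+\infty$ by Rademacher's theorem applied to the Lipschitz map $\psi^{-1}$. A standard scaling argument using $\rho_\Omega^p(f/(\kappa\lambda))\leq\kappa^{-\underline p}\rho_\Omega^p(f/\lambda)$ for $\kappa\geq 1$ then promotes this modular inequality to the norm bound $\|f\circ\psi\|_{L^{p\circ\psi(\cdot)}(\Omega')}\leq C^{1/\underline p}\|f\|_{L^{p(\cdot)}(\Omega)}$, which in particular yields the claimed membership in the composed variable Lebesgue space.

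For (ii), I would first note that the diameter hypothesis makes the closeness restriction $|x-y|\leq 1$ in the definition of the H\"older seminorm automatic on both $\Omega$ and $\Omega'$. Writing $v=u\circ\psi$, $\beta=\alpha\circ\psi$, and $L=\mathrm{Lip}(\psi)$, for any distinct $y_1,y_2\in\Omega'$ one has
\[
\frac{|v(y_1)-v(y_2)|}{|y_1-y_2|^{\max\{\beta(y_1),\beta(y_2)\}}} \leq [u]_{C^{0,\alpha(\cdot)}(\Omega)}\left(\frac{|\psi(y_1)-\psi(y_2)|}{|y_1-y_2|}\right)^{\max\{\alpha(\psi(y_1)),\alpha(\psi(y_2))\}} \leq [u]_{C^{0,\alpha(\cdot)}(\Omega)}\max(L,1)^{\overline\alpha},
\]
and the $L^\infty$ norm is trivially preserved under composition with the bijection $\psi$. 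The reverse implication follows by applying the same reasoning to $\psi^{-1}$ with exponent $\alpha\circ\psi$, using the Lipschitz constant of $\psi^{-1}$ guaranteed by the bi-Lipschitz hypothesis.

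I do not anticipate any significant obstacle: the argument is essentially bookkeeping of how Lipschitz constants and Jacobian bounds transfer through the modular integral and the H\"older seminorm. The mildest subtlety is the passage from a modular bound to a norm bound in part (i), which is a standard feature of variable-exponent Lebesgue spaces and is available here because $\underline p\geq 1$.
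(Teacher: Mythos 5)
Your proposal is correct and follows essentially the same route as the paper: change of variables in the modular for part $(i)$, and direct comparison of the H\"older quotients via the (bi-)Lipschitz bounds on $\psi$ and $\psi^{-1}$ for part $(ii)$. Your extra step in $(i)$ of upgrading the modular inequality to an explicit norm bound through the scaling $\rho^p_\Omega(f/(\kappa\lambda))\leq\kappa^{-\underline p}\rho^p_\Omega(f/\lambda)$ is a mild refinement; the paper stops at finiteness of the modular, which already gives membership since $\overline p<\infty$.
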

\proof
$i)$ Let us compute the modular of $f\circ\psi$
\begin{eqnarray*}
\rho_{\Omega'}^{p\circ\psi}(f\circ\psi)&=&\int_{\Omega'}|f\circ\psi(y)|^{p\circ\psi(y)}\mathrm{d}y\\
&=&\int_\Omega|f\circ\psi(\psi^{-1}(x))|^{p\circ\psi(\psi^{-1}(x))}|\mathrm{det} \, J_{\psi^{-1}}(x)|\mathrm{d}x\\
&\leq&\sup_{\Omega}|\mathrm{det} \, J_{\psi^{-1}}| \, \rho_{\Omega}^{p}(f)<+\infty.
\end{eqnarray*}
$ii)$ Let us consider the variable exponent H\"older seminorm of $u\circ\psi$
\begin{eqnarray*}
[u\circ\psi]_{C^{0,\alpha\circ\psi(\cdot)}(\Omega')}&=&\sup_{\substack{x,y\in\Omega' \\ x\neq y}}\frac{|u\circ\psi(x)-u\circ\psi(y)|}{|x-y|^{\alpha\circ\psi(x)}}\\
&=&\sup_{\substack{\overline x,\overline y\in\Omega \\ \psi^{-1}(\overline x)\neq\psi^{-1}(\overline y)}}\frac{|u(\overline x)-u(\overline y)|}{|\psi^{-1}(\overline x)-\psi^{-1}(\overline y)|^{\alpha(\overline x)}}\\
&\leq&C\sup_{\substack{\overline x,\overline y\in\Omega \\ \overline x\neq\overline y}}\frac{|u(\overline x)-u(\overline y)|}{|\overline x-\overline y|^{\alpha(\overline x)}}=C[u]_{C^{0,\alpha(\cdot)}(\Omega)},
\end{eqnarray*}
where in the last inequality above we have used the Lipschitz continuity and bijectivity of $\psi$. We remark that the condition on diameters of $\Omega,\Omega'$ allows us to avoid the condition of \textsl{closeness of points} $|x-y|\leq1$ in the definition of the seminorm. Then, using the Lipschitz continuity of $\psi^{-1}$ we can obtain the other implication.
\endproof

We would like to remark here that our local diffeomorphism $\psi$ in \eqref{diffeomorphism} can be set between two domains with diameter less or equal than $1$. Moreover $\psi$ is bi-Lipschitz continuous with $|\mathrm{det} \, J_\psi|=|\mathrm{det} \, J_{\psi^{-1}}|\equiv1$ (the Lipschitz constant is $L=1$) and hence $\psi$ is also an isometry between variable exponent Lebesgue/H\"older spaces and the related ones with exponents obtained after the composition with the diffeomorphism itself.

Nevertheless, if $\partial\Omega\in C^{k,\alpha'(\cdot)}$ for some integer $k\geq 1$, then $\psi^{-1}\in C^{k,\tilde\alpha(\cdot)}$ with $\tilde\alpha$ given in \eqref{tildealpha}. This means also that, if $A\in C^{k-1,\alpha(\cdot)}(B_r\cap\Omega)$ then $M\in C^{k-1,\gamma\circ\psi(\cdot)}(B_r^+)$, where $\gamma=\min\{\alpha,\tilde\alpha\}$.

Of course, $\log$-H\"older continuity of exponents is preserved after composition with $\psi$ ($p\in\mathcal P^{\log}(B_r\cap\Omega)\Longleftrightarrow p\circ\psi\in\mathcal P^{\log}(B_r^+)$ and $\alpha\in\mathcal A^{\log}(B_r\cap\Omega)\Longleftrightarrow\alpha\circ\psi\in\mathcal A^{\log}(B_r^+)$), and the same happens with pointwise inequalities between exponents, for example
\begin{equation*}
\alpha(x)\leq 2-\frac{n}{p(x)}\qquad \mathrm{in \ }B_r\cap\Omega\qquad\Longleftrightarrow\qquad\alpha(\psi(y))\leq 2-\frac{n}{p(\psi(y))}\qquad \mathrm{in \ }B_r^+.
\end{equation*}
Thanks to the lemmas and remarks stated in this section we are in position to claim the following proposition: Theorem \ref{teo1} and Theorem \ref{teo2} imply respectively Corollary \ref{cor1} part $i)$ and $ii)$.
\begin{proof}[Proof of "Theorem \ref{teo1} $\Rightarrow$ Corollary \ref{cor1} part $i)$" and of "Theorem \ref{teo2} $\Rightarrow$ Corollary \ref{cor1} part $ii)$"]
Now we are going to show how to translate boundary regularity estimates for the straightened problem into boundary local regularity for the original curved problem:
\begin{itemize}
\item[i)] when the coefficients matrix $A$ is continuous and the boundary $\partial\Omega$ is locally $C^1$, then $\psi^{-1}\in C^1$ and the coefficients of $M$ are continuous. Therefore, if we obtain $C^{0,\alpha\circ\psi(\cdot)}_{\mathrm{loc}}(B_r^+\cup B'_r)$-regularity for $v$, then it translates into $C^{0,\alpha(\cdot)}_{\mathrm{loc}}(B_r\cap\overline\Omega)$-regularity for $u$ through composition with $\psi^{-1}\in C^1$. In other words, Theorem \ref{teo1} implies Corollary \ref{cor1} part $i)$;
\item[ii)] when the coefficients matrix $A$ is $\alpha(\cdot)$-H\"older continuous and the boundary $\partial\Omega$ is locally $C^{1,\alpha'(\cdot)}$, then $\psi^{-1}\in C^{1,\tilde\alpha(\cdot)}$ and the coefficients of $M$ are $C^{0,\gamma\circ\psi(\cdot)}(B_r^+)$. Therefore, if we obtain $C^{1,\gamma\circ\psi(\cdot)}_{\mathrm{loc}}(B_r^+\cup B'_r)$-regularity for $v$, then it translates into $C^{1,\gamma(\cdot)}_{\mathrm{loc}}(B_r\cap\overline\Omega)$-regularity for $u$. This is due to the fact that  
\begin{equation*}
\nabla u=\nabla v(\psi^{-1})\cdot J_{\psi^{-1}}
\end{equation*}
is the product of a $C^{0,\gamma(\cdot)}_{\mathrm{loc}}(B_r\cap\overline\Omega)$ and a $C^{0,\tilde\alpha(\cdot)}_{\mathrm{loc}}(B_r\cap\overline\Omega)$ functions. In other words, Theorem \ref{teo2} implies Corollary \ref{cor1} part $ii)$.
\end{itemize}
\end{proof}

\subsection{A family of regularized problems}\label{sectrego}
First of all, we would like to recall the standard definition of weak solution to problem \eqref{eq1}. In case of Dirichlet boundary conditions a function $u\in H^1(B_r^+)$ is a weak solution to \eqref{eq1} if for any test function $\phi\in C^\infty_c(B_r^+)$
\begin{equation*}
\int_{B_r^+}A\nabla u\cdot\nabla\phi=\int_{B_r^+}f\phi+\int_{B_r^+}F\cdot\nabla\phi+\int_{B_r^+}Vu\phi+\int_{B_r^+}b\cdot\nabla u\phi.
\end{equation*}
Additionally we ask that $\mathrm{Tr} u$, which is an element in $H^{1/2}(\partial B_r^+)$, coincides with $g$ on $B_r'$. In case of Neumann boundary conditions, a weak solution to \eqref{eq1} satisfies for any test function $\phi\in C^\infty_c(B_r^+\cup B'_r)$ 
\begin{equation*}
\int_{B_r^+}A\nabla u\cdot\nabla\phi=\int_{B_r^+}f\phi+\int_{B_r^+}F\cdot\nabla\phi+\int_{B_r^+}Vu\phi+\int_{B_r^+}b\cdot\nabla u\phi+\int_{B_r'}h\phi.
\end{equation*}

It is not our interest to write precisely what the minimal conditions on the data are in order to give sense to the above definition. We just would like to observe that the integrability and regularity assumptions that we will make on the data are amply sufficient to guarantee the validity of the weak formulation of the problem \eqref{eq1}.

In this section we are going to introduce a family of regularized problems related to \eqref{eq1}. This regularization is done by convolving variable coefficients and the given data which belongs to variable exponent H\"older spaces with a standard family of mollifiers and instead approximating the given data which belongs to variable exponent Lebesgue spaces using some density result of smooth functions in the space.

Every time we are considering some data in a variable exponent H\"older space of the half ball $B_r^+$ (or variable coefficients), imagine to extend this function across $\{x_n=0\}$ in an even way. This operation is merely technical, does not affect the given regularity and allows to define the convolution up to the flat boundary. In fact, this way we can ensure uniform convergence of mollifications of our data to the data themselves on the compact $\overline{B^+_{\overline r}}$ with $0<\overline r<r$. 
Let
\begin{equation}\label{regucoe}
A_\varepsilon(x)=(a_{ij}^\varepsilon(x))_{i,j=1}^{n},\qquad\mathrm{with} \quad a_{ij}^\varepsilon(x)=a_{ij}\ast\overline\eta_{\varepsilon}(x)=\int_{\R^{n}}\overline\eta(t)a_{ij}(x-\varepsilon t)\mathrm{d}t,
\end{equation}
where $\overline\eta\in C^{\infty}_c(\R^{n})$
is a nonnegative radially decreasing cut-off function with $\int_{\R^{n}}\overline\eta=1$ and the standard family of mollifiers is given by
\begin{equation*}
\overline\eta_\varepsilon(x)=\frac{1}{\varepsilon^{n}}\overline\eta\left(\frac{x}{\varepsilon}\right).
\end{equation*}
For example we can imagine that $\mathrm{supp}\overline\eta=B_1$. The mollifications defined above are well defined and smooth $A_\varepsilon\in C^\infty(\overline{B_{\overline r}})$ in any fixed ball with radius $0<\overline r< r$ provided that $0<\varepsilon\leq\overline\varepsilon$ for a certain $\overline\varepsilon$ which depends on $\overline r$ ($\overline\varepsilon\searrow0$ as $\overline r\nearrow r$). Moreover, if for example $A\in C(B_r)$ then $A_\varepsilon\to A$ uniformly in $\overline{B_{\overline r}}$.

We define in the same way $F_\varepsilon$ when $F\in C^{0,\alpha(\cdot)}(B_r^+)$. Nevertheless, for the Dirichlet boundary data $g$ and the Neumann boundary data $h$ (in case we are assuming $h\in C^{0,\alpha(\cdot)}(B'_r)$) we define $g_\varepsilon,h_\varepsilon$ by convolution with a standard family of mollifiers in $\R^{n-1}$; that is, for instance
\begin{equation}\label{regucoe1}
g_\varepsilon(x')=g\ast\tilde\eta_{\varepsilon}(x')=\int_{\R^{n-1}}\tilde\eta(t)g(x'-\varepsilon t)\mathrm{d}t 
\end{equation}
where $\tilde\eta\in C^{\infty}_c(\R^{n-1})$
is a nonnegative radially decreasing cut-off function with $\int_{\R^{n-1}}\tilde\eta=1$ and
\begin{equation*}
\tilde\eta_\varepsilon(x')=\frac{1}{\varepsilon^{n-1}}\tilde\eta\left(\frac{x'}{\varepsilon}\right).
\end{equation*}

Now we are going to present the key lemma for regularization in variable exponent H\"older spaces. The result below ensures a uniform boundedness in variable exponent H\"older spaces for the family of mollifications, even if such spaces are not translation invariant. The main idea is the following: there are two scales of infinitesimals which are independent; they are the distance between points $|x-y|$ and the parameter of mollification $\varepsilon$. When the relationship between the two scales is $|x-y|\gtrsim\varepsilon$, then the translated point $x-\varepsilon t$ is not "too far" from $x$ and the $\log$-H\"older continuity of $\alpha$ is enough to control the difference $(\alpha(x-\varepsilon t)-\alpha(x))\log|x-y|$. Otherwise, when $|x-y|\lesssim\varepsilon$, the translation above can not be controlled without the help of the Lipschitz continuity of the cut-off function.

\begin{Lemma}\label{keylemma}
Let $0<\overline r<r\leq 1/2$, $k\in\mathbb N$, $0<\varepsilon\leq\overline\varepsilon(\overline r)<1$ and $\alpha\in\mathcal{A}^{\log}(B_r)$. If $u\in C^{k,\alpha(\cdot)}(B_r)$, then mollifications $u_\varepsilon=u\ast\overline\eta_\varepsilon$ defined in \eqref{regucoe} satisfy the following
$$\|u_\varepsilon\|_{C^{k,\alpha(\cdot)}(B_{\overline r})}\leq c, $$
for a constant $c>0$ which does not depend on $\varepsilon\leq\overline\varepsilon$.
\end{Lemma}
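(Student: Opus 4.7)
The plan is to follow the author's hint: reduce first to $k=0$, then split the H\"older-seminorm estimate into two regimes according to the relative size of $|x-y|$ and $\varepsilon$. For the reduction, since $D^{\beta} (u\ast\overline\eta_\varepsilon) = (D^{\beta} u)\ast\overline\eta_\varepsilon$ and Young's inequality gives $\|(D^{\beta} u)\ast\overline\eta_\varepsilon\|_{L^\infty}\leq\|D^{\beta} u\|_{L^\infty}$ for every $|\beta|\leq k$, the $L^\infty$ parts of the $C^{k,\alpha(\cdot)}$-norm are bounded trivially, and it suffices to prove that for any $v\in C^{0,\alpha(\cdot)}(B_r)$ the seminorm $[v_\varepsilon]_{C^{0,\alpha(\cdot)}(B_{\overline r})}$ is controlled by $c\,[v]_{C^{0,\alpha(\cdot)}(B_r)}$ uniformly in $\varepsilon\leq\overline\varepsilon$. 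I will therefore fix $x,y\in B_{\overline r}$ with $0<|x-y|\leq 1$ and consider the two cases.

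In the regime $\varepsilon\leq|x-y|$, I would use the translation-invariant form
\begin{equation*}
v_\varepsilon(x) - v_\varepsilon(y) = \int_{\R^n} \overline\eta(t)\bigl(v(x-\varepsilon t) - v(y - \varepsilon t)\bigr)\,dt
\end{equation*}
and dominate pointwise $|v(x-\varepsilon t)-v(y-\varepsilon t)|\leq [v]\,|x-y|^{\alpha(x-\varepsilon t)}$. The key is to reduce the exponent $\alpha(x-\varepsilon t)$ to $\alpha(x)$: since $|{-}\varepsilon t|\leq\varepsilon\leq|x-y|$ and the modulus $\omega$ is monotone nondecreasing, log-H\"older continuity of $\alpha$ yields $|\alpha(x-\varepsilon t)-\alpha(x)|\leq c/\log(e+1/|x-y|)$. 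Writing $|x-y|^{\alpha(x-\varepsilon t)-\alpha(x)}=\exp\!\bigl((\alpha(x-\varepsilon t)-\alpha(x))\log|x-y|\bigr)$ and observing that $|\log|x-y||/\log(e+1/|x-y|)$ is bounded uniformly for $|x-y|\in(0,1]$, the ratio is controlled by a universal constant, giving $|v_\varepsilon(x)-v_\varepsilon(y)|\leq c\,[v]\,|x-y|^{\alpha(x)}$.

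In the regime $|x-y|<\varepsilon$, the translation trick is too lossy and I would instead exploit the smoothness of $\overline\eta_\varepsilon$ through the zero-mean identity (legitimate since $\int\overline\eta_\varepsilon=1$)
\begin{equation*}
v_\varepsilon(x) - v_\varepsilon(y) = \int_{\R^n}\bigl(\overline\eta_\varepsilon(x-z) - \overline\eta_\varepsilon(y-z)\bigr)\bigl(v(z) - v(x)\bigr)\,dz.
\end{equation*}
On the relevant support $\{|z-x|\leq 2\varepsilon\}$, the Lipschitz continuity of $\overline\eta_\varepsilon$ gives $|\overline\eta_\varepsilon(x-z)-\overline\eta_\varepsilon(y-z)|\leq C|x-y|/\varepsilon^{n+1}$, while the variable H\"older seminorm gives $|v(z)-v(x)|\leq [v]|z-x|^{\alpha(x)}\leq C[v]\varepsilon^{\alpha(x)}$ (using $2\varepsilon\leq 1$). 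Integrating over a set of measure $\leq C\varepsilon^n$ I get
\begin{equation*}
|v_\varepsilon(x) - v_\varepsilon(y)|\leq C[v]|x-y|\,\varepsilon^{\alpha(x)-1} = C[v]|x-y|^{\alpha(x)}\bigl(|x-y|/\varepsilon\bigr)^{1-\alpha(x)}\leq C[v]|x-y|^{\alpha(x)},
\end{equation*}
the last factor being $\leq 1$ since $|x-y|<\varepsilon$ and $\alpha(x)\leq 1$.

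The main obstacle is the delicate exponent comparison in the first regime: the non-translation-invariance of the variable H\"older seminorm is exactly what prevents a one-line application of Young's inequality, and the argument hinges on using log-H\"older continuity together with monotonicity of $\omega$ to absorb the large factor $|\log|x-y||$ against the small difference $|\alpha(x-\varepsilon t)-\alpha(x)|$. The parameter $\overline\varepsilon$ must be chosen $\leq\min\{1/2,\,r-\overline r\}$ so that $B_\varepsilon(x)\subset B_r$ for every $x\in B_{\overline r}$ (so that $v$ can be evaluated at the translated points) and so that all distances in the seminorm remain $\leq 1$.
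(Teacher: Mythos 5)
Your proposal is correct and the overall skeleton is the same as the paper's (reduce to $k=0$, dichotomy on the relative size of $|x-y|$ and $\varepsilon$, translation argument on one side, Lipschitz continuity of the kernel on the other, log-H\"older continuity to bridge exponents), but your treatment of the ``small $|x-y|$'' regime is genuinely different and sharper. The paper bounds $|u_\varepsilon(x)-u_\varepsilon(y)|$ crudely by $c|x-y|/\varepsilon^{n+1}$ using only $\|u\|_{L^\infty}$, which forces the extreme threshold $|x-y|\lesssim\varepsilon^{(n+1)/(1-\overline\alpha)}$ in order to absorb $\varepsilon^{-(n+1)}$. You instead exploit the zero-mean identity $v_\varepsilon(x)-v_\varepsilon(y)=\int(\overline\eta_\varepsilon(x-z)-\overline\eta_\varepsilon(y-z))(v(z)-v(x))\,dz$, which brings the H\"older continuity of $v$ into play and produces the extra factor $\varepsilon^{\alpha(x)}$; this lets you get away with the natural threshold $|x-y|<\varepsilon$, and the factor $(|x-y|/\varepsilon)^{1-\alpha(x)}$ closes the estimate without involving $\overline\alpha$ at all. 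In the complementary regime your use of monotonicity of $\omega$ (via $\omega(\varepsilon)\leq\omega(|x-y|)$) plays exactly the role of the paper's observation that $\omega(\varepsilon)|\log\varepsilon|$ is bounded; these are interchangeable. Two small technical points: on the support of the integrand in the zero-mean identity you actually have $|z-x|\leq 2\varepsilon$ (not $\varepsilon$), so $\overline\varepsilon$ should be taken $\leq\min\{1/2,(r-\overline r)/2\}$ rather than $\leq r-\overline r$, and you should note that $|v(z)-v(x)|\leq[v]|z-x|^{\max\{\alpha(x),\alpha(z)\}}\leq[v]|z-x|^{\alpha(x)}$ uses $|z-x|\leq 1$, which that same choice of $\overline\varepsilon$ guarantees. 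Neither affects the validity of the argument.
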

\proof
It is enough to prove the statement in case $k=0$, then the same reasoning applies also to partial derivatives of any order. The uniform $L^\infty$ bound is trivial, in fact for $x\in B_{\overline r}$
\begin{equation*}
|u_\varepsilon(x)|\leq \int_{\R^n}\overline\eta(t)|u(x-\varepsilon t)|\mathrm{d}t\leq \|u\|_{L^\infty(B_r)}.
\end{equation*}
Now we want to estimate uniformly in $\varepsilon$ the $\alpha(\cdot)$-H\"older seminorm of $u_\varepsilon$ in $B_{\overline r}$. Hence, for any triplet $(x,y,\varepsilon)\in B_{\overline r}\times B_{\overline r}\times (0,\overline\varepsilon]=:A$, we have that it belongs to one of the following two subsets
$$A_1=\{(x,y,\varepsilon)\in A \ : \ |\log|x-y||\geq \frac{n+1}{1-\overline\alpha}|\log\varepsilon|\}$$
or
$$A_2=\{(x,y,\varepsilon)\in A \ : \ |\log|x-y||< \frac{n+1}{1-\overline\alpha}|\log\varepsilon|\}.$$
If we have a triplet in $A_1$, we use the Lipschitz continuity of $\overline\eta$ in the estimate below
\begin{eqnarray*}
|u_\varepsilon(x)-u_\varepsilon(y)|&\leq&\int_{\R^n}|\overline\eta_\varepsilon(x-t)-\overline\eta_\varepsilon(y-t)|\cdot|u(t)|\mathrm{d}t\\
&\leq&\frac{c|x-y|}{\varepsilon^{n+1}}\\
&=&c|x-y|^{\alpha(x)}|x-y|^{\overline\alpha-\alpha(x)}\frac{|x-y|^{1-\overline\alpha}}{\varepsilon^{n+1}}\\
&\leq&c|x-y|^{\alpha(x)}\leq c|x-y|^{\max\{\alpha(x),\alpha(y)\}},
\end{eqnarray*}
where in the previous estimates we used the condition which characterizes $A_1$ in order to bound uniformly $\frac{|x-y|^{1-\overline\alpha}}{\varepsilon^{n+1}}\leq 1$, and in the last inequality we used the $\log$-H\"older continuity of exponent $\alpha$. In fact, if $\alpha(x)=\max\{\alpha(x),\alpha(y)\}$ we have done, otherwise we can bound uniformly
$$(\alpha(x)-\alpha(y))\log|x-y|=|\alpha(x)-\alpha(y)|\cdot|\log|x-y||\leq \omega(|x-y|)|\log|x-y||\leq c.$$
If we have a triplet in $A_2$, we notice that
$$\omega(\varepsilon)|\log|x-y||< \frac{n+1}{1-\overline\alpha}\omega(\varepsilon)|\log\varepsilon|\leq c.$$
Hence, we show that there exists a positive constant independent from $t\in \mathrm{supp}\overline \eta= B_1$ and from the triplet such that
\begin{equation*}
|x-y|^{\alpha(x-\varepsilon t)}\leq c |x-y|^{\alpha(x)}.
\end{equation*}
If $\alpha(x-\varepsilon t)>\alpha(x)$ the inequality is trivial. Otherwise, it is enough to estimate uniformly
$$(\alpha(x-\varepsilon t)-\alpha(x))\log|x-y|=|\alpha(x-\varepsilon t)-\alpha(x)|\cdot|\log|x-y||\leq\omega(\varepsilon|t|)\cdot|\log|x-y||\leq\omega(\varepsilon)\cdot|\log|x-y||\leq c.$$
Hence, we can conclude that
\begin{eqnarray*}
|u_\varepsilon(x)-u_\varepsilon(y)|&\leq&\int_{\R^n}\overline\eta(t)|u(x-\varepsilon t)-u(y-\varepsilon t)|\mathrm{d}t\\
&\leq&c\int_{\R^n}\overline\eta(t)|x-y|^{\max\{\alpha(x-\varepsilon t),\alpha(y-\varepsilon t)\}}\mathrm{d}t\\
&=&c\int_{\R^n}\overline\eta(t)|x-y|^{\alpha(x-\varepsilon t)}\mathrm{d}t\\
&\leq&c|x-y|^{\alpha(x)}\leq c|x-y|^{\max\{\alpha(x),\alpha(y)\}}.
\end{eqnarray*}
Above we have assumed without loss of generality that $\alpha(x-\varepsilon t)=\max\{\alpha(x-\varepsilon t),\alpha(y-\varepsilon t)\}$, otherwise one can change the roles of $x$ and $y$.
\endproof

The idea now is to regularize the data which belong to variable exponent Lebesgue spaces by \cite[Theorem 3.4.12]{Die}; that is, density of $C^{\infty}_c(\Omega)$ in $L^{p(\cdot)}(\Omega)$ where $\Omega$ is open. Hence, when forcing terms, fields, potentials and drifts $f,F,V,b$ belong to certain variable exponent Lebesgue spaces $L^{p(\cdot)}(B_r^+)$, then we define sequences of functions $f_k,F_k,V_k,b_k$ in $C^{\infty}_c(B_r^+)$ strongly converging in the Lebesgue spaces. When $h\in L^{s(\cdot)}(B'_r)$, then we define a sequence of functions $h_k$ in $C^{\infty}_c(B'_r)$ strongly converging in the Lebesgue space.

\subsubsection{The regularization for H\"older estimates}
We are dealing here with given data which satisfy integrability and regularity conditions in Theorem \ref{teo1}. Hence, as $k\to+\infty$ we consider a sequence $\varepsilon_k\to0^+$. Along such a sequence we study problems
\begin{equation}\label{poisseps}
\begin{cases}
-\mathrm{div}\left(A_{\varepsilon_{k}}\nabla u_k\right)=f_k+\mathrm{div}F_{k}+V_ku_k+b_k\cdot\nabla u_k &\mathrm{in} \ B_r^+\\
u_k=g_{\varepsilon_{k}} \quad\mathrm{or}\quad A_{\varepsilon_{k}}\nabla u_k\cdot\nu=h_k &\mathrm{on} \ B'_r,
\end{cases}
\end{equation}
where $f_k,F_k,V_k,b_k,h_k$ are sequences of smooth functions which strongly converges in the right variable exponent Lebesgue spaces to $f,F,V,b,h$, while $A_{\varepsilon_k},g_{\varepsilon_k}$ are mollifications of $A,g$ as in \eqref{regucoe} and \eqref{regucoe1}.
\begin{remark}
We would like to notice here that weak solutions to \eqref{poisseps} are uniformly-in-$k$ locally bounded by Moser iterations assuming the integrability and regularity conditions on data given in Theorem \ref{teo1}. This can be done by Sobolev and trace inequalities, using the fact that the quadratic forms related to coefficients $A_{\varepsilon_k}$ are equivalent norms in the Sobolev space uniformly-in-$k$. Nonetheless, we remark here that the space $L^{p(\cdot)}$ trivially embeds into $L^{\underline{p}}$ and $C^{0,\alpha(\cdot)}$ into $C^{0,\underline\alpha}$. In other words, it is very easy to check that there exists a uniform-in-$k$ constant such that
\begin{equation}\label{Linfeps}
\|u_k\|_{L^\infty(B^+_{r/2})}\leq c\left(\|u_k\|_{L^2(B^+_r)}+\|f_k\|_{L^{p(\cdot)}(B^+_r)}+\|F_k\|_{L^{q(\cdot)}(B^+_r)}+\begin{cases}\|g_{\varepsilon_k}\|_{C^{0,\alpha(\cdot)}(B'_r)}\quad\mathrm{or}\\
\|h_k\|_{L^{s(\cdot)}(B'_r)}\end{cases}\right).
\end{equation}
For this reason, without loss of generality we will always assume a uniform-in-$k$ $L^\infty$-bound for weak solutions to \eqref{poisseps}. We notice that the constant in \eqref{Linfeps} depends on the norms of the potential and transport terms and so is uniform if we assume uniform bounds $\|V_k\|_{L^{m_1(\cdot)}(B_r^+)}\leq c_1$ and $\|b_k\|_{L^{m_2(\cdot)}(B_r^+)}\leq c_2$. For the requirement $\underline m_2>n$ we refer to \cite{Tru}. For the sharp requirement $\underline s>n-1$ we refer for example to \cite[Proposition 2.6. (i)]{JinLiXio}.
\end{remark}

Hence, solutions to \eqref{poisseps} satisfy the suboptimal estimate
\begin{equation}\label{C0alphaesteps}
\|u_k\|_{C^{0,\alpha(\cdot)}(B^+_{r/2})}\leq c\left(\|u_k\|_{L^\infty(B^+_r)}+\|f_k\|_{L^{p(\cdot)}(B^+_r)}+\|F_k\|_{L^{q(\cdot)}(B^+_r)}+\begin{cases}\|g_{\varepsilon_k}\|_{C^{0,\alpha(\cdot)}(B'_r)}\quad\mathrm{or}\\
\|h_k\|_{L^{s(\cdot)}(B'_r)}\end{cases}\right),
\end{equation}
with a constant which depends on $\|V_k\|_{L^{m_1(\cdot)}(B_r^+)}$ and $\|b_k\|_{L^{m_2(\cdot)}(B_r^+)}$ and possibly is not uniform in $k$. This kind of estimate, fixed $k>0$, is implied by classical boundary regularity results for second order uniformly elliptic equations in divergence form with smooth coefficients and data (this is usually done by "freezing the coefficients" and passing to constant ones, see for instance \cite[Chapter 6]{GilTru}, \cite{Amb} or \cite{GiaMar}). In the following result, we are going to prove that actually the constant in the estimate above \eqref{C0alphaesteps} can be taken uniform as $k\to+\infty$. The proof is based on a contradiction argument involving a blow-up procedure and a Liouville type theorem for harmonic functions (this kind of argument is very well known and we refer to \cite[Chapter 2]{FerRos} for a nice and clear overview of the technique).

\begin{Proposition}\label{prop1}
As $k\to+\infty$ let $\{u_k\}$ be a family of solutions to \eqref{poisseps} in $B^+_r$ with $r\leq1/2$. Let $p,q,m_1,m_2\in\mathcal{P}^{\log}(B^+_r)$ with $\underline p,\underline m_1>\frac{n}{2}$, $\underline q,\underline m_2>n$. Let $s\in\mathcal{P}^{\log}(B'_r)$ with $\underline s>n-1$. Let $\|V_k\|_{L^{m_1(\cdot)}(B_r^+)}\leq c_1$ and $\|b_k\|_{L^{m_2(\cdot)}(B_r^+)}\leq c_2$. Let also $\alpha\in \mathcal{A}^{\log}(B^+_r)$ with \eqref{alpha1}. Then, there exists a positive constant
independent from $k$ such that \eqref{C0alphaesteps} holds true.
\end{Proposition}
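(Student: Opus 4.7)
I argue by contradiction, via a blow-up/compactness scheme. Suppose \eqref{C0alphaesteps} fails uniformly in $k$: there is a sequence of solutions to \eqref{poisseps} with $[u_k]_{C^{0,\alpha(\cdot)}(B^+_{r/2})}/\theta_k\to+\infty$, where $\theta_k$ denotes the bracketed right-hand side. Dividing each $u_k$ by its seminorm leaves the coefficients $V_k,b_k$ and their $L^{m_i(\cdot)}$-bounds untouched, and reduces us to $[u_k]_{C^{0,\alpha(\cdot)}(B^+_{r/2})}=1$ with $\theta_k\to 0$. Pick $x_k,y_k\in\overline{B^+_{r/2}}$ with $|u_k(x_k)-u_k(y_k)|\geq\tfrac12|x_k-y_k|^{\alpha(x_k)}$ and set $\rho_k=|x_k-y_k|$; since $\|u_k\|_{L^\infty}\leq\theta_k\to 0$ while $|u_k(x_k)-u_k(y_k)|\geq\rho_k/2$, one forces $\rho_k\to 0$.

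\medskip

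Passing to a subsequence one has the dichotomy: either $(x_k)_n/\rho_k\to+\infty$ (\emph{interior case}; put $c_k:=x_k$), or $(x_k)_n/\rho_k$ stays bounded (\emph{boundary case}; put $c_k:=(x_k',0)\in B'_r$). Define the rescaled solutions
\begin{equation*}
v_k(z)=\frac{u_k(c_k+\rho_k z)-u_k(c_k)}{\rho_k^{\alpha(c_k)}},
\end{equation*}
whose natural domains invade $\R^n$ or $\R^n_+$ respectively. The $\log$-H\"older continuity of $\alpha$, in the form $|\alpha(c_k+\rho_k z)-\alpha(c_k)|\cdot|\log\rho_k|\leq C$ on bounded $z$-sets, together with $[u_k]_{C^{0,\alpha(\cdot)}}=1$, produces a uniform H\"older bound on each ball $B_R$ (essentially with exponent $\alpha_\infty$). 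Evaluated at the points $\xi_k=(x_k-c_k)/\rho_k$ and $w_k=(y_k-c_k)/\rho_k$, both remaining in a bounded region, the same argument yields $|v_k(\xi_k)-v_k(w_k)|\geq c_0>0$ uniformly. Arzel\`a--Ascoli combined with a diagonal extraction delivers a locally uniform limit $v_\infty$, together with $c_k\to c_\infty$, $\alpha(c_k)\to\alpha_\infty\in[\underline\alpha,\overline\alpha]$, $\xi_k\to\xi_\infty$ and $w_k\to w_\infty$.

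\medskip

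Continuity of $A$ and the uniform convergence $A_{\varepsilon_k}\to A$ on compacts give $A_{\varepsilon_k}(c_k+\rho_k z)\to A_0:=A(c_\infty)$, a symmetric positive-definite constant matrix. Substituting into \eqref{poisseps}, $v_k$ weakly satisfies
\begin{equation*}
-\mathrm{div}(A_k\nabla v_k)=\tilde f_k+\tilde G_k+\mathrm{div}\tilde F_k+\tilde b_k\cdot\nabla v_k,
\end{equation*}
with $\tilde f_k(z)=\rho_k^{2-\alpha(c_k)}f_k(c_k+\rho_k z)$, $\tilde G_k=\rho_k^{2-\alpha(c_k)}(V_ku_k)(c_k+\rho_k z)$, $\tilde F_k(z)=\rho_k^{1-\alpha(c_k)}F_k(c_k+\rho_k z)$, $\tilde b_k(z)=\rho_k b_k(c_k+\rho_k z)$, and analogous rescalings of $g_{\varepsilon_k}$ or $h_k$ on the flat piece. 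The sharp inequalities \eqref{alpha1} make every scaling exponent nonnegative; combined with a change of variables in the modular — for which the $\log$-H\"older continuity of $p,q,m_1,s$ is used to uniformly control factors like $\rho_k^{(2-\alpha(c_k))(p(c_k+\rho_k z)-p(c_k))}$ — this shows that the variable-exponent Lebesgue (and H\"older, for $g_{\varepsilon_k}$) norms of $\tilde f_k,\tilde G_k,\tilde F_k$ and of the rescaled boundary data on $B_R$ (resp.\ $B'_R$) tend to zero, since $\theta_k\to 0$ and $\|u_k\|_\infty\to 0$. The drift $\tilde b_k$ remains bounded (and in fact vanishes) in $L^{m_2(c_k+\rho_k\cdot)}(B_R)$ thanks to $\underline m_2>n$; coupled with a standard Caccioppoli $H^1_{\mathrm{loc}}$-bound on $v_k$, this allows passage to the limit in the distributional sense. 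Therefore $v_\infty$ weakly solves $-\mathrm{div}(A_0\nabla v_\infty)=0$ on $\R^n$ (interior case) or on $\R^n_+$ with homogeneous Dirichlet ($v_\infty=0$ on $\{z_n=0\}$) or conormal ($A_0\nabla v_\infty\cdot e_n=0$) condition. The linear change of variables normalising $A_0$ to the identity, followed in the boundary case by odd (Dirichlet) or even (Neumann) reflection across $\{z_n=0\}$, turns $v_\infty$ into an entire harmonic function with growth $|v_\infty(z)|\leq C(1+|z|)^{\alpha_\infty}$, $\alpha_\infty<1$; the classical Liouville theorem forces $v_\infty$ to be constant, in contradiction with $|v_\infty(\xi_\infty)-v_\infty(w_\infty)|\geq c_0>0$.

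\medskip

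The delicate point of the proof is precisely the rescaling of data in variable-exponent norms: the change of variables produces $z$-dependent powers of $\rho_k$ with exponents $p(c_k+\rho_k z)$, and only $\log$-H\"older continuity of $p$ (i.e.\ $p\in\mathcal P^{\log}$) decouples them from $z$ uniformly on bounded sets, so that the modular of $\tilde f_k$ sees the true $L^{p(\cdot)}$-norm of $f_k$. The drift contribution $b\cdot\nabla u$, borderline in scaling, is the other technical bottleneck and is handled by exploiting the strict inequality $\underline m_2>n$ in tandem with a uniform Caccioppoli bound on $v_k$; a completely analogous role is played by $\underline s>n-1$ for the Neumann datum.
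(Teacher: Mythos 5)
Your blow-up scheme follows the same overall plan as the paper: normalize so that the H\"older seminorm is $1$ while the data vanish, select a pair of points nearly attaining the seminorm, rescale, pass to a limit equation with frozen constant coefficients, and contradict a Liouville theorem after a linear change of variables and a reflection. However, there is a genuine gap at the very first step: you extract $x_k,y_k\in\overline{B^+_{r/2}}$ from the seminorm on $B^+_{r/2}$, but you have no control on $\mathrm{dist}(x_k,\partial B_{r/2})/\rho_k$. If this ratio stays bounded along the sequence, the set $(B^+_{r/2}-c_k)/\rho_k$ --- which is the only region where you actually know a uniform H\"older bound for $v_k$ from $[u_k]_{C^{0,\alpha(\cdot)}(B^+_{r/2})}=1$ --- does \emph{not} invade $\R^n$ or $\R^n_+$: it converges to a half-space (or wedge, in the boundary case) whose flat boundary comes from the blown-up sphere $\partial B_{r/2}$, not from $\{x_n=0\}$. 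On such a set the limit would be nonconstant, globally $\alpha_\infty$-H\"older and harmonic, but there is no boundary condition prescribed on the part of the boundary arising from $\partial B_{r/2}$, so no Liouville theorem applies and the contradiction evaporates. The $L^\infty$ information alone cannot rescue this, since $\|u_k\|_{L^\infty}/\rho_k^{\alpha(c_k)}$ need not stay bounded, so you cannot extend a uniform bound on $v_k$ to the region where the equation holds but the seminorm is unknown.

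The paper resolves precisely this issue by running the contradiction on $\eta u_k$ for a radial cut-off $\eta$ with $\eta\equiv1$ on $B_{r/2}$, $\mathrm{supp}\,\eta=B_{3r/4}$, and linear decay $\eta(z)\leq\ell\,\mathrm{dist}(z,\partial B_{3r/4})$; the Lipschitz decay of $\eta$ forces the maximizing pair $z_k,\zeta_k$ to satisfy $\mathrm{dist}(z_k,\partial^+B^+_{3r/4})/r_k\to+\infty$, which is exactly what makes the rescaled domain fill out $\R^n$ or $\R^n_+$ and keeps the maximum attained inside the support of $\eta$. (An equivalent fix is a weighted seminorm of the form $\sup\min\{d(x),d(y)\}^N|u(x)-u(y)|/|x-y|^{\alpha(x)}$ with $d$ the distance to the curved boundary.) Your remaining steps --- the change of variables in the modular to show $\tilde f_k,\tilde G_k,\tilde F_k,\tilde b_k$ and the rescaled boundary data vanish, using $\log$-H\"older continuity of the integrability exponents together with $\underline m_2>n$, $\underline s>n-1$ and a Caccioppoli bound, followed by the square-root change of variables and odd/even reflection --- are all in line with the paper's Steps 3--5, up to the minor imprecision that the reflection in the boundary case must be performed across $C^{-1}\{x_n=0\}$ rather than across $\{z_n=0\}$, since after normalising $A_0$ to the identity the flat boundary moves.
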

\proof
Without loss of generality we can assume the existence of a uniform constant $c>0$ as $k\to+\infty$ such that the terms in the right hand side in \eqref{C0alphaesteps} are uniformly bounded; that is,
\begin{equation*}
\|u_k\|_{L^\infty(B^+_r)}+\|f_k\|_{L^{p(\cdot)}(B^+_r)}+\|F_k\|_{L^{q(\cdot)}(B^+_r)}+\begin{cases}\|g_{\varepsilon_k}\|_{C^{0,\alpha(\cdot)}(B'_r)}\quad\mathrm{or}\\
\|h_k\|_{L^{s(\cdot)}(B'_r)}\end{cases}\leq c.
\end{equation*}
If this is not the case, the thesis is trivially satisfied. We have already assumed that 
\begin{equation*}
\|V_k\|_{L^{m_1(\cdot)}(B^+_r)}+\|b_k\|_{L^{m_2(\cdot)}(B^+_r)}\leq c.
\end{equation*}
{\bf Step 1: the contradiction argument.} We argue by contradiction; that is, there exists $\alpha\in \mathcal{A}^{\log}(B^+_r)$ with \eqref{alpha1}, and a subsequence of solutions (always denoted by $\{u_k\}$) to \eqref{poisseps} such that
$$\|\eta u_k\|_{C^{0,\alpha(\cdot)}(B^+_r)}\to+\infty,$$
where the function $\eta\in C^\infty_c(B_r)$ is a radial decreasing cut-off function such that $\eta\equiv1$ in $B_{r/2}$, $0\leq\eta\leq1$ in $B_r$ and $\mathrm{supp}\eta:=B=B_{3r/4}$. Moreover we can take $\eta\in C^{0,1}(B)$ such that $\eta(z)\leq \ell\mathrm{dist}(z,\partial B)$ where $\ell$ is the Lipschitz constant. Hence, we are supposing that
\begin{equation*}
\max_{\substack{z,\zeta \in B^+_{r}\\z\neq\zeta}}\frac{\abs{\eta u_k(z)-\eta u_k(\zeta)}}{\abs{z-\zeta}^{\alpha(z)}}=L_k \to +\infty.
\end{equation*}
We can assume that $L_k$ is attained by a couple of points $z_k,\zeta_k\in B\cap\{x_n\geq0\}$ and we call $r_k:=|z_k-\zeta_k|$. Hence
\begin{equation*}
\frac{\abs{\eta u_k(z_k)-\eta u_k(\zeta_k)}}{\abs{z_k-\zeta_k}^{\alpha(z_k)}}=L_k.
\end{equation*}
Using the $L^\infty$ bound of the $u_k$'s and the Lipschitz continuity of $\eta$, one can easily show that 
\begin{itemize}
\item[$i)$] $r_k\to0$,
\item[$ii)$] $\ddfrac{\mathrm{dist}(z_k,\partial^+ B^+)}{r_k}\to+\infty$   \  and   \   $\ddfrac{\mathrm{dist}(\zeta_k,\partial^+ B^+)}{r_k}\to+\infty$,
\end{itemize}
where $B^+=B_{3r/4}^+$ and $\partial^+B^+=\partial B_{3r/4}\cap\{x_n>0\}$. In fact, since $r_k=|z_k-\zeta_k|\leq 2r\leq1$, point $i)$ is implied by
\begin{equation*}
L_k=\frac{\abs{\eta u_k(z_k)-\eta u_k(\zeta_k)}}{r_k^{\alpha(z_k)}}\leq\frac{c}{r_k^{\overline\alpha}}.
\end{equation*}
For $ii)$ one can reason in the same way using also Lipschitz continuity of $\eta$.

%

\noindent {\bf Step 2: the blow-up sequences.} Let us define
\begin{equation*}
v_k(z)=\frac{\eta u_k(\hat z_k+r_kz)-\eta u_k(\hat z_k)}{L_kr_k^{\alpha(z_k)}},\qquad w_k(z)=\frac{\eta(\hat z_k)(u_k(\hat z_k+r_kz)- u_k(\hat z_k))}{L_kr_k^{\alpha(z_k)}},
\end{equation*}
with
$$z\in B(k):=\frac{B^+- \hat z_k}{r_k},$$
and $\hat z_k\in B\cap\{x_n\geq0\}$ to be determined.
We will take in any case $\hat z_k=(z'_k,\hat z_{k,n})$, where $z_k=(z'_k,z_{k,n})$. 
At this point, since $z_k\in B\cap\{x_n\geq0\}$, then the following quantity has a sign
\begin{equation}\label{rat1}
\frac{z_{k,n}}{r_k}\geq0.
\end{equation}
There are two cases:
\begin{itemize}
\item[{\bf Case 1 :}] the term in \eqref{rat1} is unbounded, then we choose $\hat z_{k,n}=z_{k,n}$. In other words, $\hat z_k=z_k$;
\item[{\bf Case 2 :}] the term in \eqref{rat1} is bounded, then we choose $\hat z_{k,n}=0$. In other words, $\hat z_k=(z'_k,0)$;
\end{itemize}

Now we want to understand the limit set $B(\infty)=\lim_{k\to +\infty} B(k)$. This limit of sets must be understood in the following way: $z\in B(\infty)$ if there exists $k_0$ such that $z\in B(k)$ for any $k>k_0$. The blow-up domain $B(k)$ can be expressed as the following intersection
\begin{equation*}
B(k)=\frac{B-\hat z_k}{r_k}\cap\frac{\{x_n>0\}-\hat z_k}{r_k}=B_{\frac{3}{4r_k}}\left(-\frac{\hat z_k}{r_k}\right)\cap H_k,
\end{equation*}
with
$$H_k=\left\{x_n>-\frac{\hat z_{k,n}}{r_k}\right\}.$$
First, we observe that
$$B_{\frac{3}{4r_k}}\left(-\frac{\hat z_k}{r_k}\right)\longrightarrow\R^{n}.$$
This is due to the fact that
\begin{equation*}\label{ballin0}
B_{\frac{\mathrm{dist}(\hat z_k,\partial^+ B^+)}{2}}\subset B-\hat z_k
\end{equation*}
and hence, using $ii)$, one has the result. In fact, in {\bf Case 1} this is immediate by the choice $\hat z_k=z_k$. Instead, in {\bf Case 2}
\begin{equation*}
+\infty\leftarrow\frac{\mathrm{dist}(z_k,\partial^+ B^+)}{r_k}\leq \frac{\mathrm{dist}(\hat z_k,\partial^+ B^+)}{r_k}+\frac{z_{k,n}}{r_k}\leq \frac{\mathrm{dist}(\hat z_k,\partial^+ B^+)}{r_k} + c.
\end{equation*}
In {\bf Case 1}
$$H_k\longrightarrow\R^{n}.$$
In {\bf Case 2}
$$H_k=\left\{x_n>0\right\}.$$

We remark here that since $\hat z_k\in B\cap\{x_n\geq0\}$, then $0\in \overline{B(k)}$ for any $k$. Let us consider $z,\zeta\in K\subset \overline{B(\infty)}$ in a compact set. Then
\begin{equation*}\label{hlungo}
|v_k(z)-v_k(\zeta)|=\frac{|\eta u(\hat z_k+r_kz)-\eta u(\hat z_k+r_k\zeta)|}{L_kr_k^{\alpha(z_k)}}\leq |z-\zeta|^{\alpha(\hat z_k+r_kz)}r_k^{\alpha(\hat z_k+r_kz)-\alpha(\hat z_k)}r_k^{\alpha(\hat z_k)-\alpha(z_k)}.
\end{equation*}
For any $k$, we have named $z$ as the point such that $\alpha(\hat z_k+r_kz)\geq\alpha(\hat z_k+r_k\zeta)$ (the roles of $z$ and $\zeta$ are interchangeable here). Using the $\log$-H\"older continuity of exponent $\alpha$, we can bound uniformly in the compact set
\begin{equation*}
r_k^{\alpha(\hat z_k+r_kz)-\alpha(\hat z_k)}\leq c(K).
\end{equation*}
In fact
\begin{equation*}
|\alpha(\hat z_k+r_kz)-\alpha(\hat z_k)|\cdot|\log r_k|\leq\omega(r_k|z|)|\log r_k|\leq\omega\left(r_k\max_K|z|\right)|\log r_k|\leq c.
\end{equation*}
Nevertheless, in {\bf Case 2}, when $\hat z_k\neq z_k$ and $z_{k,n}/r_k$ is bounded, we can bound also
\begin{equation}\label{cappucciononcappuccio}
r_k^{\alpha(\hat z_k)-\alpha(z_k)}\leq c.
\end{equation}
In fact
\begin{equation*}
|\alpha(\hat z_k)-\alpha(z_k)|\cdot|\log r_k|\leq\omega(z_{k,n})|\log r_k|=\omega\left(r_k\frac{z_{k,n}}{r_k}\right)|\log r_k|\leq c.
\end{equation*}
Hence there exists $k_0$ such that for $k>k_0$
\begin{equation}\label{condition1}
\max_{\substack{z,\zeta \in K\\ z\neq\zeta}}\frac{\abs{v_k(z)-v_k(\zeta)}}{\abs{z-\zeta}^{\alpha(\hat z_k+r_kz)}}\leq c(K).
\end{equation}
This condition gives uniform boundedness and uniform equicontinuity of the sequence $v_k$ on any compact set $K$ containing the origin. In fact, $v_k(0)=0$ for any $k$ and
\begin{equation*}
|v_k(z)|\leq c(K)|z|^{\max\{\alpha(\hat z_k+r_kz),\alpha(\hat z_k)\}}\leq c(K)\max\{1,|z|^{\overline\alpha}\}\leq\tilde c(K).
\end{equation*}
Moreover, fixed $\varepsilon>0$, taken $\delta=\min\{1,(\varepsilon/c(K))^{1/\underline\alpha}\}$ and $z,\zeta\in K$ with $|z-\zeta|<\delta$
\begin{equation*}
|v_k(z)-v_k(\zeta)|\leq c(K)|z-\zeta|^{\alpha(\hat z_k+r_kz)}<\varepsilon.
\end{equation*}
Then, by the Ascoli-Arzel\'a theorem, there exists a subsequence converging uniformly on the compact $K$ to a function $v$. By a compact exhaustion of $\overline{B(\infty)}$ with $K_1\subset K_2\subset K_3...$ and a diagonal procedure, we can extract a subsequence converging on any compact set to the entire profile $v$.

Moreover, 
 
\begin{equation}\label{alpha10}
\left|v_k\left(\frac{z_k- \hat z_k}{r_k}\right)-v_k\left(\frac{\zeta_k- \hat z_k}{r_k}\right)\right|=1\;
\end{equation}
for any $k$. Hence, we have that the limit $v$ is a non constant. In fact, up to pass to a subsequence, in {\bf Case 1} $\frac{\zeta_k-z_k}{r_k}\to\overline z\in \mathbb S^{n-1}$ since any point of the sequence belongs to $\mathbb S^{n-1}$. Hence, by \eqref{alpha10}, uniform convergence and equicontinuity, we have $|v(0)-v(\overline z)|=1$. In {\bf Case 2}, up to pass to a subsequence, the sequence 
$$\frac{z_k-\hat z_k}{r_k}=\frac{(0,z_{k,n})}{r_k}\to(0,0,c)=z_1,$$
and
$$\frac{\zeta_k-\hat z_k}{r_k}=\frac{\zeta_k-z_k}{r_k}+\frac{(0,z_{k,n})}{r_k}\to \overline z+(0,0,c)=z_2,$$
where $\overline z\in \mathbb S^{n-1}$. Hence, by \eqref{alpha10}, uniform convergence and equicontinuity, again $|v(z_1)-v(z_2)|=1$.

%
%
Let us define $z_\infty=\lim z_k=\lim \hat z_k$, up to pass to subsequences, and $\alpha_\infty=\alpha(z_\infty)$. Taken $z,\zeta\in \overline{B(\infty)}$, they are contained in a compact set containing the origin $K$. Hence, passing to the pointwise limit in inequality \eqref{condition1}, we have
\begin{equation*}
\frac{\abs{v(z)-v(\zeta)}}{\abs{z-\zeta}^{\alpha_\infty}}\leq e^2.
\end{equation*}
This implies that $v$ is globally $\alpha_\infty$-H\"older continuous in $B(\infty)$. Nevertheless, $w_k\to v$ do converge to the same limit on compact sets since
\begin{equation*}
\sup_{z\in K}|v_k(z)-w_k(z)|\to0.
\end{equation*}
{\bf Step 3: the rescaled equations.} Let $k>k_0$ large enough. The functions $w_k$ solve in $B(k)$
\begin{eqnarray}\label{Lakw0}
-\mbox{div}\!\left(A_{\varepsilon_k}(\hat z_k+r_k\cdot)\nabla w_k\right)(z)&=&\frac{\eta(\hat z_k)}{L_k}r_k^{2-\alpha(z_k)}f_{k}(\hat z_k+r_kz)+\frac{\eta(\hat z_k)}{L_k}r_k^{1-\alpha(z_k)}\mbox{div}\!\left(F_k(\hat z_k+r_k\cdot)\right)(z)\nonumber\\
&&+\frac{\eta(\hat z_k)}{L_k}r_k^{2-\alpha(z_k)}V_{k}u_k(\hat z_k+r_kz)+r_kb_k(\hat z_k+r_kz)\cdot\nabla w_k(z)
\end{eqnarray}
in {\bf Case 1}, while in {\bf Case 2} it remains also the boundary condition at $\{x_n=0\}$.
We show that the right hand side in the rescaled equation is vanishing in $L^1_{\mathrm{loc}}$ (let us work for simplicity in {\bf Case 1}) in the sense that taken a test function $\phi\in C^\infty_c(\R^n)$, with $\mathrm{supp}\phi\subset B_R$ for a certain $R>0$
\begin{eqnarray}\label{L1loc}
\frac{\eta(z_k)}{L_k}r_k^{2-\alpha(z_k)}\left|\int_{B_R}f_{k}(z_k+r_kz)\phi(z)\mathrm{d}z\right|&\leq& cr_k^{2-\alpha(z_k)-n}\int_{B_{r_kR}(z_k)}|f_{k}(\zeta)|\mathrm{d}\zeta\nonumber\\
&\leq& \frac{c}{L_k}r_k^{2-\alpha(z_k)-n}\|f_{k}\|_{L^{p(\cdot)}(B_{r_kR}(z_k))}\|\chi_{B_{r_kR}(z_k)}\|_{L^{p'(\cdot)}(\R^n)}\nonumber\\
&\leq& \frac{c}{L_k}r_k^{2-\alpha(z_k)-n}|B_{r_kR}(z_k)|^{\avint_{B_{r_kR(z_k)}}\frac{1}{p'}}\nonumber\\
&\leq& \frac{c}{L_k}r_k^{2-\alpha(z_k)-\frac{n}{p(z_k)}}r_k^{n\left(\frac{1}{p(z_k)}-\avint_{B_{r_kR(z_k)}}\frac{1}{p}\right)}\leq \frac{c}{L_k}.
\end{eqnarray}
In fact, by $\log$-H\"older continuity of $1/p$ we have
\begin{equation*}
\left|\frac{1}{p(z_k)}-\avint_{B_{r_kR(z_k)}}\frac{1}{p(z)}\mathrm{d}z\right|\leq \avint_{B_{r_kR(z_k)}}\frac{c}{\log(e+\frac{1}{|z_k-z|})}\mathrm{d}z\leq\frac{c}{\log(e+\frac{1}{|z_k-(z_k+r_kR\xi)|})}
\end{equation*}
for $\xi\in \mathbb S^{n-1}$. For the estimate above on the norm of characteristic functions, in term of Lebesgue measure of the set elevated to the average of the reciprocal of the exponent, we refer to \cite[Section 4.5]{Die}. The term in the right hand side with the potential vanishes using the same argument, once we recall that the $u_k$'s are uniformly bounded. With very similar reasonings we can deal with the field term
\begin{eqnarray*}
\frac{\eta(z_k)}{L_k}r_k^{1-\alpha(z_k)}\left|\int_{B_R}F_{k}(z_k+r_kz)\cdot\nabla\phi(z)\mathrm{d}z\right|&\leq& cr_k^{1-\alpha(z_k)-n}\int_{B_{r_kR}(z_k)}|F_{k}(\zeta)|\mathrm{d}\zeta\nonumber\\
&\leq& \frac{c}{L_k}r_k^{1-\alpha(z_k)-n}\|F_{k}\|_{L^{q(\cdot)}(B_{r_kR}(z_k))}\|\chi_{B_{r_kR}(z_k)}\|_{L^{q'(\cdot)}(\R^n)}\nonumber\\
&\leq& \frac{c}{L_k}r_k^{1-\alpha(z_k)-n}|B_{r_kR}(z_k)|^{\avint_{B_{r_kR(z_k)}}\frac{1}{q'}}\nonumber\\
&\leq& \frac{c}{L_k}r_k^{1-\alpha(z_k)-\frac{n}{q(z_k)}}r_k^{n\left(\frac{1}{q(z_k)}-\avint_{B_{r_kR(z_k)}}\frac{1}{q}\right)}\leq \frac{c}{L_k}.
\end{eqnarray*}
Now we show that, in order to make vanish the drift term, no restriction on $\alpha$ is needed.
\begin{eqnarray*}
&&r_k\left|\int_{B_R}b_{k}(z_k+r_kz)\cdot\nabla w_k(z)\phi(z)\mathrm{d}z\right|\nonumber\\
&\leq& cr_k^{1-n/2}\left(\int_{B_R}|\nabla w_k|^2\right)^{1/2}\|b_{k}\|_{L^{m_2(\cdot)}(B_{r_kR}(z_k))}\|\chi_{B_{r_kR}(z_k)}\|_{L^{2m_2(\cdot)/(m_2(\cdot)-2)}(\R^n)}\nonumber\\
&\leq&cr_k^{1-n/2}|B_{r_kR}(z_k)|^{\avint_{B_{r_kR(z_k)}}\frac{m_2-2}{2m_2}}\left(\int_{B_R}|\nabla w_k|^2\right)^{1/2}\nonumber\\
&\leq&cr_k^{1-\frac{n}{m_2(z_k)}}\left(\int_{B_R}|\nabla w_k|^2\right)^{1/2}=t_k\left(\int_{B_R}|\nabla w_k|^2\right)^{1/2},
\end{eqnarray*}
with $t_k\to0$. We will show that the full term vanishes thanks to a uniform energy bound of the $w_k$'s in \eqref{cacioppoli0}.

We remark that in {\bf Case 2} nothing changes apart from notations, up to suitably adjusting terms using \eqref{cappucciononcappuccio}.

\noindent {\bf Step 4: the limit equation.}
We define the limit constant coefficients matrix
\begin{equation*}
\hat A=A(z_\infty)=\lim_{k\to+\infty} A_{\varepsilon_k}(\hat z_k)=\lim_{k\to+\infty} A_{\varepsilon_k}(\hat z_k+r_kz).
\end{equation*}

By a Caccioppoli type inequality, easily obtained by multiplying \eqref{Lakw0} by $\varphi w_k$, being $\varphi$ a cut-off function, taking into account possible boundary conditions, that functions $w_k$ are uniformly bounded and the vanishing of right hand sides (the drift term can be reabsorbed having $t_k\to0$), then we obtain uniform-in-$k$ energy bounds holding on compact subsets of $B(\infty)$
\begin{equation}\label{cacioppoli0}
\forall R>0,\;\exists c>0:\;\forall k, \qquad
\int_{B_R\cap B(\infty)}A_{\varepsilon_k}(\hat z_k+r_k z) \nabla w_k\cdot \nabla w_k\leq c\;.
\end{equation}
In {\bf Case 1}, $B(\infty)=\R^{n}$. Then the limit $v$ belongs to $H^1_{\mathrm{loc}}(\R^{n})$ and is a weak solution of
\begin{equation*}\label{entire0}
-\mathrm{div}\left(\hat A\nabla v\right)=0\qquad \mathrm{in} \ \R^{n},
\end{equation*}
in the sense that for every $\phi\in C^\infty_c(\R^{n})$
$$\int_{\R^{n}}\hat A\nabla v\cdot\nabla\phi=0.$$
In {\bf Case 2} we have $B(\infty)=\{x_n>0\}=\R^n_+$. Then the limit $v$ belongs to $(H^1_{\{x_n=0\}})_{\mathrm{loc}}(\overline{\R^{n}_+})$ or $H^1_{\mathrm{loc}}(\overline{\R^{n}_+})$ and is a weak solution of
\begin{equation*}\label{entire01}
\begin{cases}
-\mathrm{div}\left(\hat A\nabla v\right)=0 & \mathrm{in} \ \R^{n}_+,\\
v=0 \qquad\mathrm{or}\qquad \hat A\nabla v\cdot\nu=0 &\mathrm{on} \ \{x_n=0\},
\end{cases}
\end{equation*}
in the sense that for every $\phi\in C^\infty_c(\R^{n}_+)$ or $\phi\in C^\infty_c(\overline{\R^{n}_+})$
$$\int_{\R^{n}_+}\hat A\nabla v\cdot\nabla\phi=0.$$

As we have already said, this is done using the Caccioppoli inequality \eqref{cacioppoli0}, which gives weak convergence in $H^1_{\mathrm{loc}}$. Moreover, we use the pointwise convergence of coefficients $A_{\varepsilon_k}(\hat z_k+r_k\cdot)\to \hat A$, hence 
\begin{equation*}\label{conv10}
\int_{B(\infty)}A_{\varepsilon_k}(\hat z_k+r_k\cdot)\nabla w_k\cdot\nabla\phi\to\int_{B(\infty)}\hat A\nabla v\cdot\nabla\phi.
\end{equation*}
When we consider Dirichlet boundary conditions in {\bf Case 2}, thanks to Lemma \ref{keylemma} we have on compact sets $K'$ of the hyperplane $\{x_n=0\}$ a uniform constant $c(K')$ such that
\begin{eqnarray*}
|w_k(x',0)|&=&\frac{\eta(\hat z_k)}{L_kr_k^{\alpha(z_k)}}\left|g_{\varepsilon_k}(\hat z_k+r_k(x',0))-g_{\varepsilon_k}(\hat z_k))\right|\\
&\leq&c(K')\frac{\eta(\hat z_k)}{L_kr_k^{\alpha(z_k)}}|r_k(x',0)|^{\max\{\alpha(\hat z_k+r_k(x',0)),\alpha(\hat z_k)\}}\to0
\end{eqnarray*}
by the $\log$-H\"older continuity of $\alpha$ and using \eqref{cappucciononcappuccio}.
In case of Neumann boundary conditions, using integration by parts in the equation, the boundary term vanishes having $\underline s>n-1$ and $\alpha(x',0)\leq 1-\frac{n-1}{s(x',0)}$; that is,
\begin{eqnarray*}
\int_{B_R\cap\partial B(\infty)}A_{\varepsilon_k}(\hat z_k+r_kz)\nabla w_k(z)\cdot\nu\phi(z)&=&\frac{\eta(\hat z_k)r_k^{1-\alpha(\hat z_k)}r_k^{\alpha(\hat z_k)-\alpha(z_k)}}{L_k}\int_{B_R\cap\partial B(\infty)}h_{\varepsilon_k}(\hat z_k+r_kz)\phi(z)\\
&\leq&c \frac{r_k^{1-\alpha(\hat z_k)-\frac{n-1}{s(\hat z_k)}}}{L_k}\to0.
\end{eqnarray*}

\noindent {\bf Step 5: a square root reflection and a Liouville type theorem.}
The limit matrix $\hat A$ with constant coefficients is symmetric positive definite. Let us consider the square root $C=\sqrt{\hat A}$ of $\hat A$, which is symmetric and positive definite too. We remark that the linear transform associated to the inverse of such a matrix, maps $\R^{n}$ in itself, and in case $B(\infty)=\{x_n>0\}$, maps such half space in another half space
$$C^{-1}B(\infty)=C^{-1}\{x_n>0\}=\{\zeta \ : \ (C\zeta)_{n}>0\},$$
and the hyperplane $\{x_n=0\}$ in the boundary hyperplane $C^{-1}\{x_n=0\}=\{\zeta \ : \ (C\zeta)_{n}=0\}$. In other words, let $u(\zeta)=v(C \zeta)$, where $C \zeta=z$. The function $u$ is a weak solution of
\begin{equation*}\label{entire20}
-\Delta u(\zeta)=0\qquad \mathrm{in} \ C^{-1}B(\infty),
\end{equation*}
is non constant and globally $\alpha_\infty$-H\"older continuous. Additionally, in {\bf Case 2} it vanishes or has vanishing normal derivative on $C^{-1}\{x_n=0\}$; that is, weakly solves
\begin{equation*}\label{entire200}
\begin{cases}
-\Delta u(\zeta)=0 & \mathrm{in} \ C^{-1}\{x_n>0\}\\
u=0\qquad\mathrm{or}\qquad\nabla u\cdot\nu'=0 & \mathrm{in} \ C^{-1}\{x_n=0\},
\end{cases}
\end{equation*}
where $\nu'(\zeta)=C\nu(C\cdot \zeta)$ is the outward normal vector to the boundary hyperplane. Hence, in the last situation we can perform an odd or even reflection across that hyperplane, having in any case the equation satisfied in the whole of $\R^{n}$. Hence we have proved that the limit $u\in H^1_{\mathrm{loc}}(\mathbb{R}^{n})$ is not constant and globally harmonic in $\mathbb{R}^{n}$. Moreover it is globally $\alpha_\infty$-H\"older continuous with $\alpha_\infty<1$, in clear contradiction with the Liouville theorem in \cite[Corollary 2.3]{NorTavTerVer}.
\endproof

\subsubsection{The regularization for gradient estimates}

We are dealing here with given data which satisfy integrability and regularity conditions in Theorem \ref{teo2}. Hence, as $k\to+\infty$ we consider a sequence $\varepsilon_k\to0^+$. Along such a sequence we study problems
\begin{equation}\label{poiss2eps}
\begin{cases}
-\mathrm{div}\left(A_{\varepsilon_{k}}\nabla u_k\right)=f_k+\mathrm{div}F_{\varepsilon_k}+V_ku_k+b_k\cdot\nabla u_k &\mathrm{in} \ B_r^+\\
u_k=g_{\varepsilon_{k}} \quad\mathrm{or}\quad A_{\varepsilon_{k}}\nabla u_k\cdot\nu=h_{\varepsilon_k} &\mathrm{on} \ B'_r,
\end{cases}
\end{equation}
where $f_k,V_k,b_k$ are sequences of smooth functions which strongly converge in the right variable exponent Lebesgue spaces to $f,V,b$, while $A_{\varepsilon_k},F_{\varepsilon_k},g_{\varepsilon_k},h_{\varepsilon_k}$ are mollifications of $A,F,g,h$ as in \eqref{regucoe} and \eqref{regucoe1}.

%
%
Hence, solutions to \eqref{poiss2eps} satisfy the estimate
\begin{equation}\label{C1alphaesteps}
\|u_k\|_{C^{1,\alpha(\cdot)}(B^+_{r/2})}\leq c\left(\|u_k\|_{L^\infty(B^+_r)}+\|f_k\|_{L^{p(\cdot)}(B^+_r)}+\|F_{\varepsilon_k}\|_{C^{0,\alpha(\cdot)}(B^+_r)}+\begin{cases}\|g_{\varepsilon_k}\|_{C^{1,\alpha(\cdot)}(B'_r)}\quad\mathrm{or}\\
\|h_{\varepsilon_k}\|_{C^{0,\alpha(\cdot)}(B'_r)}\end{cases}\right),
\end{equation}
with a constant which depends on $\|V_k\|_{L^{m_1(\cdot)}(B_r^+)}$ and $\|b_k\|_{L^{m_2(\cdot)}(B_r^+)}$ and possibly is not uniform in $k$ (see for instance \cite{Amb,GiaMar,GilTru}). Now we are going to show that actually the constant in the estimate above can be taken uniform in $k$. The proof follows the contradiction argument developed in \cite{SoaTer10}.

A standard way to deal with inhomogeneous boundary data, in order to prove gradient estimates, is to extend them inside the domain to functions with the same regularity and consider the equation satisfied by the difference of the original solution and such extension. In this way, due to the linearity of the equation, one could pass to the study of a problem with homogeneous boundary data possibly with some new forcing terms appearing. We would like to remark here that, when boundary data are prescribed in variable exponent H\"older spaces, it is not even clear, at least for us, if an extension of such data inside the domain could be easily defined and how in that case the extension of the variable exponent itself should be defined. For this reason we will work directly on the problem with inhomogeneous boundary conditions.

\begin{remark}\label{alphastar}
We would like to remark here that actually we already know that solutions to \eqref{poiss2eps} enjoy a uniform $C^{1,\underline\alpha}$ local bound. This is due to classical results (see for instance \cite{GilHor,Lie1}, for the sharp requirement on the drift term we refer also to the more recent \cite{Dong1,Dong2}) once we have observed that variable exponent Lebesgue and H\"older spaces $L^{p(\cdot)}$ and $C^{k,\alpha(\cdot)}$ trivially embed into $L^{\underline p}$ and $C^{k,\underline\alpha}$. Hence, assuming $\|V_k\|_{L^{m_1(\cdot)}(B_r^+)}\leq c_1$ and $\|b_k\|_{L^{m_2(\cdot)}(B_r^+)}\leq c_2$, we already have a uniform constant such that
\begin{equation*}\label{2C1alphaesteps}
\|u_k\|_{C^{1,\underline\alpha}(B^+_{r/2})}\leq c\left(\|u_k\|_{L^\infty(B^+_r)}+\|f_k\|_{L^{p(\cdot)}(B^+_r)}+\|F_{\varepsilon_k}\|_{C^{0,\alpha(\cdot)}(B^+_r)}+\begin{cases}\|g_{\varepsilon_k}\|_{C^{1,\alpha(\cdot)}(B'_r)}\quad\mathrm{or}\\
\|h_{\varepsilon_k}\|_{C^{0,\alpha(\cdot)}(B'_r)}\end{cases}\right),
\end{equation*}
for the not sharp and possibly small $\underline\alpha\leq \min\{1-n/\underline p,1-n/\underline m_1,1-n/\underline m_2\}$. Of course, the constant is uniform if the variable coefficients of the equations are uniformly bounded in the $\underline\alpha$-H\"older space.
Anyway, the proof of this fact is actually contained in the following result by reasoning in two steps; that is, proving first the not sharp uniform bound with $\underline\alpha$ and then using this information to get the sharp result (see \cite[Remark 5.3]{SirTerVit1} for more details about this procedure).
\end{remark}

\begin{Proposition}\label{prop2}
As $k\to+\infty$ let $\{u_k\}$ be a family of solutions to \eqref{poiss2eps} in $B^+_r$ with $r\leq1/2$. Let $p,m_1,m_2\in\mathcal{P}^{\log}(B^+_r)$ with $\underline p,\underline m_1,\underline m_2>n$. Let $\|V_k\|_{L^{m_1(\cdot)}(B_r^+)}\leq c_1$ and $\|b_k\|_{L^{m_2(\cdot)}(B_r^+)}\leq c_2$. Let also $\alpha\in \mathcal{A}^{\log}(B^+_r)$ with \eqref{alpha2}. Then, there exists a positive constant independent from $k$ such that \eqref{C1alphaesteps} holds true.
\end{Proposition}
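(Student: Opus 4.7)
The strategy is the blow-up/contradiction scheme of Proposition \ref{prop1}, adapted to the gradient setting as in \cite{SoaTer10}. By Remark \ref{alphastar}, we may assume a preliminary uniform $C^{1,\underline\alpha}$ bound for $\{u_k\}$, which will provide the compactness needed for the blow-up sequences. Suppose the estimate fails along a subsequence, that is, the weighted gradient seminorm
\begin{equation*}
L_k=\max_{\substack{z,\zeta\in B_r^+\\ z\neq\zeta}}\frac{|\nabla(\eta u_k)(z)-\nabla(\eta u_k)(\zeta)|}{|z-\zeta|^{\alpha(z)}}\to+\infty,
\end{equation*}
attained at points $z_k,\zeta_k$ with $r_k=|z_k-\zeta_k|\to 0$. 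As in the proof of Proposition \ref{prop1}, with the analogous dichotomy determine $\hat z_k\in\overline{B^+}$ (\textbf{Case 1}, $\hat z_k=z_k$; \textbf{Case 2}, $\hat z_k=(z_k',0)$), so that the blow-up domains $B(k)=(B^+-\hat z_k)/r_k$ converge either to $\R^n$ or to $\R^n_+$.

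The key modification is that we now subtract an affine tangent polynomial rather than a constant. Define
\begin{equation*}
w_k(z)=\frac{u_k(\hat z_k+r_k z)-u_k(\hat z_k)-r_k\nabla u_k(\hat z_k)\cdot z}{L_k\, r_k^{1+\alpha(z_k)}},
\end{equation*}
so that $w_k(0)=0$, $\nabla w_k(0)=0$, while $|\nabla w_k(\xi_k)-\nabla w_k(\eta_k)|=1$ at the rescaled maximizers. The preliminary $C^{1,\underline\alpha}$ bound together with the $\log$-H\"older continuity of $\alpha$ (and \eqref{cappucciononcappuccio} in \textbf{Case 2}) yields uniform equicontinuity of $w_k$ and $\nabla w_k$ on compact sets; by Ascoli-Arzel\`a and a diagonal extraction, $w_k\to v$ in $C^1_{\mathrm{loc}}$ to a nontrivial profile with $\nabla v(0)=0$.

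For the rescaled equation, after subtracting the constants $A_{\varepsilon_k}(\hat z_k)\nabla u_k(\hat z_k)$ and $F_{\varepsilon_k}(\hat z_k)$ (which have vanishing divergence), one has
\begin{equation*}
-\mathrm{div}\bigl(A_{\varepsilon_k}(\hat z_k+r_k\cdot)\nabla w_k\bigr)=\frac{r_k^{1-\alpha(z_k)}}{L_k}f_k\circ+\frac{r_k^{-\alpha(z_k)}}{L_k}\mathrm{div}\bigl[(F_{\varepsilon_k}-F_{\varepsilon_k}(\hat z_k))+(A_{\varepsilon_k}-A_{\varepsilon_k}(\hat z_k))\nabla u_k(\hat z_k)\bigr]\circ+\cdots.
\end{equation*}
The $f_k$, $V_k u_k$ and $b_k\cdot\nabla u_k$ pieces vanish in $L^1_{\mathrm{loc}}$ by the variable exponent H\"older-type estimate \eqref{L1loc} and the conditions \eqref{alpha2} (with the extra $r_k$ gained here because of the subtracted linear piece). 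The divergence pieces are controlled by the uniform $C^{0,\alpha(\cdot)}$-bound on $F_{\varepsilon_k}$ and $A_{\varepsilon_k}$ from Lemma \ref{keylemma}, giving a size $\lesssim r_k^{\alpha(\hat z_k)-\alpha(z_k)}/L_k\to 0$. Similarly, in \textbf{Case 2} one modifies the subtracted affine polynomial by replacing $u_k(\hat z_k)+r_k\nabla u_k(\hat z_k)\cdot z$ with its Taylor counterpart built from $g_{\varepsilon_k}$ in the tangential directions (for the Dirichlet case); the residual boundary values vanish thanks to the uniform $C^{1,\alpha(\cdot)}(B_r')$ bound on $g_{\varepsilon_k}$ from Lemma \ref{keylemma}. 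For Neumann data, the rescaled flux $(r_k^{1-\alpha(z_k)}/L_k)(h_{\varepsilon_k}(\hat z_k+r_k\cdot)-h_{\varepsilon_k}(\hat z_k))$ is annihilated by the $\alpha(\cdot)$-H\"older regularity of $h_{\varepsilon_k}$.

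Passing to the limit, $v$ is a nontrivial weak solution of $-\mathrm{div}(\hat A\nabla v)=0$ on $\R^n$ or on $\R^n_+$ with homogeneous Dirichlet or Neumann data on $\{x_n=0\}$. The square root transformation $C=\sqrt{\hat A}$ followed by odd/even reflection (as in Proposition \ref{prop1}, \textbf{Step 5}) produces an entire harmonic function $u$ on $\R^n$ whose gradient is globally $\alpha_\infty$-H\"older with $\alpha_\infty<1$; hence $u$ grows at most like $|x|^{1+\alpha_\infty}$, so by \cite[Corollary 2.3]{NorTavTerVer} $u$ is affine, contradicting $|\nabla v|\not\equiv 0$ in the limit. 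The main technical obstacle is the delicate bookkeeping of the subtracted affine polynomial in \textbf{Case 2}: since an extension of inhomogeneous variable-exponent boundary data is not readily available, the correct Taylor-like subtraction must be crafted directly from $g_{\varepsilon_k}$ (or $h_{\varepsilon_k}$) so as to keep both the bulk equation structure and the boundary condition under control uniformly in $k$.
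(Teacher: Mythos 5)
Your overall architecture matches the paper's proof: contradiction by blowing up at maximizers $z_k,\zeta_k$ of a weighted gradient seminorm, subtraction of the affine tangent, dichotomy for $\hat z_k$, vanishing of the rescaled source terms via Remark \ref{alphastar}, Taylor expansion of $g_{\varepsilon_k}$ (resp.\ $h_{\varepsilon_k}$) on the flat boundary, square root reflection, and the Liouville theorem of \cite{NorTavTerVer}. Your decomposition of the rescaled equation differs slightly from the paper's: you keep the non-frozen matrix $A_{\varepsilon_k}(\hat z_k+r_k\cdot)$ on the left-hand side and push $(A_{\varepsilon_k}-A_{\varepsilon_k}(\hat z_k))\nabla u_k(\hat z_k)$ (constant vector) to the right, while the paper freezes the matrix at $\hat z_k$ on the left and keeps $(A_{\varepsilon_k}-A_{\varepsilon_k}(\hat z_k))\nabla w_k$ on the right; both are valid and both exploit Remark \ref{alphastar} to kill the extra term, so this is just a different (and perhaps marginally cleaner) bookkeeping.

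However, there is a genuine gap in your compactness step. You write that ``the preliminary $C^{1,\underline\alpha}$ bound together with the $\log$-H\"older continuity of $\alpha$ ... yields uniform equicontinuity of $w_k$ and $\nabla w_k$ on compact sets.'' This is not correct. The preliminary bound gives $[\nabla u_k]_{C^{0,\underline\alpha}}\leq C$, hence
\begin{equation*}
|\nabla w_k(z)-\nabla w_k(\zeta)|\;\leq\;\frac{C\,r_k^{\underline\alpha-\alpha(z_k)}}{L_k}\,|z-\zeta|^{\underline\alpha},
\end{equation*}
and since $\underline\alpha\leq\alpha(z_k)$ the factor $r_k^{\underline\alpha-\alpha(z_k)}$ blows up as $r_k\to 0$; the ratio $r_k^{\underline\alpha-\alpha(z_k)}/L_k$ has no reason to stay bounded. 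The equicontinuity of $\nabla\overline w_k$ must instead come from the \emph{extremality} of $L_k$: by definition the variable H\"older seminorm of each $\partial_j(\eta u_k)$ is at most $L_k$, and after rescaling and invoking $\log$-H\"older continuity of $\alpha$ (and \eqref{cappucciononcappuccio} in Case 2) one gets a uniform bound $[\partial_j\overline w_k]_{C^{0,\alpha(\cdot)}(K)}\leq c(K)$ on every compact $K$, exactly as in \eqref{condition1} for the $C^{0,\alpha}$-case. Together with $\overline w_k(0)=0$, $\nabla\overline w_k(0)=0$ this gives the boundedness and equicontinuity needed for Ascoli--Arzel\`a. Remark \ref{alphastar} enters the argument later, to control the right-hand side of the rescaled equation (the terms involving $|\nabla u_k|$), not to produce compactness of the blow-ups. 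A related small slip: your $w_k$ omits the cut-off factor, and the non-degeneracy ``$=1$'' at the rescaled maximizers holds for $\partial_i(\eta u_k)$, not for $\partial_i u_k$; the paper reconciles these via two intertwined blow-up sequences ($v_k$ with $\eta(\hat z_k+r_k\cdot)$ and $w_k$ with $\eta(\hat z_k)$), showing they share the same limit, and the Lipschitz control on $\eta$ is used precisely there. Finally, in \textbf{Case 2} the fact that $r_k\to 0$ is not automatic; the paper gives a short contradiction argument for it (item $iv)$ in Step 2), which is not addressed in your sketch, though it can also be deduced cheaply once the preliminary $C^{1,\underline\alpha}$ bound is available.
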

\proof
Without loss of generality we can assume the existence of a uniform constant $c>0$ as $k\to+\infty$ such that the terms in the right hand side in \eqref{C1alphaesteps} are uniformly bounded; that is,
\begin{equation*}
\|u_k\|_{L^\infty(B^+_r)}+\|f_k\|_{L^{p(\cdot)}(B^+_r)}+\|F_{\varepsilon_k}\|_{C^{0,\alpha(\cdot)}(B^+_r)}+\begin{cases}\|g_{\varepsilon_k}\|_{C^{1,\alpha(\cdot)}(B'_r)}\quad\mathrm{or}\\
\|h_{\varepsilon_k}\|_{C^{0,\alpha(\cdot)}(B'_r)}\end{cases}\leq c.
\end{equation*}
We have already assumed that 
\begin{equation*}
\|V_k\|_{L^{m_1(\cdot)}(B^+_r)}+\|b_k\|_{L^{m_2(\cdot)}(B^+_r)}\leq c.
\end{equation*}
{\bf Step 1: the contradiction argument.} We argue by contradiction; that is, there exists $\alpha\in \mathcal{A}^{\log}(B^+_r)$ with \eqref{alpha2}, and a subsequence of solutions (always denoted by $\{u_k\}$) to \eqref{poiss2eps} such that
$$\|\eta u_k\|_{C^{1,\alpha(\cdot)}(B^+_r)}\to+\infty,$$
where the function $\eta$ is a radial and decreasing cut-off function such that $\eta\in C^\infty_c(B_r)$ with $0\leq\eta\leq1$, $\eta\equiv1$ in $B_{r/2}$ and $\mathrm{supp}\eta=B=B_{3r/4}$. Moreover we take $\eta\in C^{0,1}(B)$ with $\partial_j\eta\in C^{0,1}(B)$ for any $j=1,...,n$, with the same constant $\ell$, that is $\eta(z)\leq \ell d(z,\partial B)$ and $\partial_j\eta(z)\leq \ell d(z,\partial B)$. We infer that the H\"older seminorm tends to infinity:
\begin{equation*}
\max_{j=1,...,n}\sup_{\substack{z,\zeta \in B^+_{r}\\z\neq\zeta}}\frac{\abs{\partial_j(\eta u_k)(z)-\partial_j(\eta u_k)(\zeta)}}{\abs{z-\zeta}^{\alpha(z)}}=L_k \to +\infty,
\end{equation*}
where $\partial_j=\partial_{x_j}$ for any $j=1,...,n$. Up to consider a subsequence, there exist $i\in\{1,...,n\}$, and two sequences of points $z_k,\zeta_k$ in $B\cap\{x_n\geq0\}$ such that
\begin{equation*}
\frac{\abs{\partial_i(\eta u_k)(z_k)-\partial_i(\eta u_k)(\zeta_k)}}{\abs{z_k-\zeta_k}^{\alpha(z_k)}}=L_k.
\end{equation*}
We remark that it is not possible that the H\"older seminorms of the sequence of derivatives $\partial_i(\eta u_k)$ stay bounded while their $L^\infty$-norms explode. This would be in contradiction with the uniform energy bound of the sequence $u_k$. We define $r_k=|z_k-\zeta_k|\leq2r\leq1$.

\noindent{\bf Step 2: the blow-up sequences.} Now we want to define two blow up sequences: let
\begin{equation*}
v_k(z)=\frac{\eta(\hat z_k+r_kz)}{L_kr_k^{1+\alpha(z_k)}}\left(u_k(\hat z_k+r_kz)-u_k(\hat z_k)\right),\qquad w_k(z)=\frac{\eta(\hat z_k)}{L_kr_k^{1+\alpha(z_k)}}\left(u_k(\hat z_k+r_kz)-u_k(\hat z_k)\right),
\end{equation*}
for $z\in B(k):=\frac{B^+_r-\hat z_k}{r_k}$ and $\hat z_k\in B\cap\{x_n\geq0\}$ to be determined. In any case $\hat z_k=(z_k',\hat z_{k,n})$ where $z_k=(z_k',z_{k,n})$. Let $B(\infty)=\lim_{k\to+\infty}B(k)$. We consider the two different cases as in the proof of Proposition \ref{prop1}; that is,
\begin{itemize}
\item[{\bf Case 1 :}] the term $\frac{z_{k,n}}{r_k}$ is unbounded, then we choose $\hat z_{k,n}=z_{k,n}$. In other words, $\hat z_k=z_k$;
\item[{\bf Case 2 :}] the term $\frac{z_{k,n}}{r_k}$ is bounded, then we choose $\hat z_{k,n}=0$. In other words, $\hat z_k=(z'_k,0)$.
\end{itemize}

In {\bf Case 1}, since points $z_k$ lie on a compact set, then we already know that $r_k\to0$. The fact that $r_k\to0$ in {\bf Case 2} has to be proved after suitably adjusting the blow-up sequences: this helps also to have the sequence of derivatives uniformly bounded in a point, in order to apply the Ascoli-Arzel\'a convergence theorem. Such adjustment consists in subtracting a linear term:
$$\overline v_k(z)=v_k(z)-\nabla v_k(0)\cdot z,\qquad\overline w_k(z)=w_k(z)-\nabla w_k(0)\cdot z.$$
One can see that
\begin{itemize}
\item[$i)$] $\overline v_k(0)=\overline w_k(0)=0$ and $|\nabla\overline v_k|(0)=|\nabla\overline w_k|(0)=0$;
\item[$ii)$] $[\partial_j\overline v_k]_{C^{0,\alpha(\cdot)}(K)}=[\partial_jv_k]_{C^{0,\alpha(\cdot)}(K)}$ for any compact $K\subset \overline{B(\infty)}$ and any $j=1,...,n$. Moreover, fixing any compact set $K\subset \overline{B(\infty)}$, there exists $\overline k$ such that for any $k>\overline k$,
\begin{equation*}\label{abovec19}
\sup_{\substack{z,\zeta \in K\\z\neq \zeta}}\frac{\abs{\partial_jv_k(z)-\partial_jv_k(\zeta)}}{\abs{z-\zeta}^{\alpha(\hat z_k+r_kz)}}\leq c(K);
\end{equation*}
Nevertheless, for the $i^{\mathrm{th}}$-partial derivatives
\begin{equation*}\label{abovec191}
1\leq\sup_{\substack{z,\zeta \in K\\z\neq \zeta}}\frac{\abs{\partial_iv_k(z)-\partial_iv_k(\zeta)}}{\abs{z-\zeta}^{\alpha(\hat z_k+r_kz)}}.
\end{equation*}
This is implied by the following
\begin{eqnarray}\label{nondeg}
&&\left|\partial_iv_k\left(\frac{z_k-\hat z_k}{r_k}\right)-\partial_iv_k\left(\frac{\zeta_k-\hat z_k}{r_k}\right)\right|\nonumber\\
&=&\left|\frac{1}{L_kr_k^{\alpha(z_k)}}(\partial_i(\eta u_k)(z_k)-\partial_i(\eta u_k)(\zeta_k))+\frac{u_k(\hat z_k)}{L_kr_k^{\alpha(z_k)}}(\partial_i\eta(z_k)-\partial_i\eta(\zeta_k))\right|\nonumber\\
&=&1+O\left(\frac{|u_k(\hat z_k)|r_k^{1-\alpha(z_k)}\ell}{L_k}\right)=1+o(1)=\left|\frac{z_k-\hat z_k}{r_k}-\frac{z_k-\hat z_k}{r_k}\right|^{\alpha(z_k)}+o(1);
\end{eqnarray}
\item[$iii)$] the sequences $\{\overline v_k\}, \{\overline w_k\}$ have the same asymptotic behaviour on compact subsets of $\overline{B(\infty)}$ and converge to the same function $\overline v$. This is done by equicontinuity and uniform boundedness of the family of functions $\overline v_k$ and $\nabla\overline v_k$ on compact subsets of $\overline{B(\infty)}$. The limit function $\overline v$ possesses gradient which is globally $\alpha_\infty$-H\"older continuous ($\alpha_\infty=\alpha(z_\infty)$) and is non constant passing to the limit in \eqref{nondeg} since
$$|\partial_i\overline v(0)-\partial_i\overline v(\overline z)|=1\qquad\mathrm{or}\qquad|\partial_i\overline v(\overline z_1)-\partial_i\overline v(\overline z_2)|=1,$$
respectively in {\bf Cases 1,2}, where $\overline z,\overline z_1, \overline z_2$ are the same as in the proof of Proposition \ref{prop1};
\item[$iv)$] a contradiction argument shows that $r_k\to0$ also in {\bf Case 2}: seeking a contradiction, let us suppose that $r_k\to\overline r>0$. Hence,
$$\sup_{z\in B(k)}|v_k(z)|\leq\frac{2\|\eta\|_{L^\infty(B_r)}\|u_k\|_{L^\infty(B^+_r)}}{r_k^{1+\alpha(z_k)}L_k}\leq\frac{c}{\overline r^{1+\alpha(z_\infty)}L_k}\to0,$$
which means that $v_k\to0$ uniformly on compact subsets of $\overline{B(\infty)}$. This fact implies also that pointwisely in $B(\infty)$
$$\overline v(z)=\lim_{k\to+\infty}\nabla v_k(0)\cdot z.$$
Since $0\in \overline{B(k)}$ for any $k$, it is easy to see that $B(\infty)$ contains a half ball $B^+_R$, for a small enough radius $R>0$. If the sequence $\{\partial_jv_k(0)\}$ was unbounded at least for $j=1,...,n$, then
$$|\overline v(Re_j)|=R\lim_{k\to+\infty}|\nabla v_k(0)\cdot e_j|=+\infty,$$
which is in contradiction with the fact that $\overline v\in C^{1,\alpha_\infty}(B^+_R)$ and hence bounded. Hence, $\{\nabla v_k(0)\}$ is a bounded sequence, and up to consider a subsequence, it converges to a vector $\nu\in\mathbb{R}^{n}$ and $\overline v(z)=\nu\cdot z$, which is in contradiction with the fact that $\overline v$ has non constant gradient.

Hence we can conclude that
\begin{equation*}
B(\infty)=\begin{cases}
\R^{n} &\mathrm{in \ {\bf Case \ 1}}\\
\{x_n>0\} &\mathrm{in \ {\bf Case \ 2}}.
\end{cases}
\end{equation*}
In fact, even if this time the blow-up points $z_k,\zeta_k$ may be on the boundary of the support of the cut-off function, we can ensure that
$$B_{\frac{r}{8r_k}}\subset \frac{B_{r}-\hat z_k}{r_k}.$$
\end{itemize}

\noindent{\bf Step 3: the rescaled equations.} Let $k>k_0$ large enough. Functions $\overline w_k$ solve in $B(k)$
\begin{eqnarray}\label{Lakoverw139}
-\mbox{div}\!\left(A_{\varepsilon_k}(\hat z_k)\nabla \overline w_k\right)(z)
&=&\frac{\eta(\hat z_k)}{L_k}r_k^{1-\alpha(z_k)}f_{k}(\hat z_k+r_kz)\nonumber\\
&&+\frac{\eta(\hat z_k)}{L_k}r_k^{-\alpha(z_k)}\mbox{div}\!\left(F_{\varepsilon_k}(\hat z_k+r_k\cdot)-F_{\varepsilon_k}(\hat z_k)\right)(z)\nonumber\\
&&+\frac{\eta(\hat z_k)}{L_k}r_k^{1-\alpha(z_k)}V_{k}(\hat z_k+r_kz)u_k(\hat z_k+r_kz)\nonumber\\
&&+r_kb_{k}(\hat z_k+r_kz)\cdot\nabla w_k(z)\nonumber\\
&&+\mbox{div}\!\left(\left(A_{\varepsilon_k}(\hat z_k+r_k\cdot)-A_{\varepsilon_k}(\hat z_k)\right)\nabla  w_k\right)(z),
\end{eqnarray}
plus possibly boundary conditions in {\bf Case 2} (we will deal with boundary terms later). The first and third terms vanish arguing as in \eqref{L1loc}. In order to make vanish the fourth and fifth terms we use Remark \ref{alphastar}. In fact, using Lemma \ref{keylemma}, which gives uniform-in-$k$ $\alpha(\cdot)$-H\"older continuity of the coefficients $a_{ij}$'s and using \eqref{cappucciononcappuccio} we have
\begin{eqnarray*}
&&\int_{B_R\cap B(\infty)}\left|\left(A_{\varepsilon_k}(\hat z_k+r_kz)-A_{\varepsilon_k}(\hat z_k)\right)\nabla  w_k\cdot\nabla\phi(z)\right|\\
&\leq&c\int_{B_R\cap B(\infty)}r_k^{\max\{\alpha(\hat z_k+r_kz),\alpha(\hat z_k)\}}|\nabla  w_k|\\
&\leq& \frac{c(R)}{L_k}\int_{B_R\cap B(\infty)}|\nabla  u_k(\hat z_k+r_kz)|,
\end{eqnarray*}
using the $\log$-H\"older continuity of $\alpha$ having
$$|\alpha(\hat z_k+r_kz)-\alpha(\hat z_k)|\cdot|\log r_k|\leq\omega(r_k|z|)|\log r_k|\leq\omega(r_kR)|\log r_k|\leq c.$$
Were $|\nabla u_k|$ uniformly bounded, we could promptly conclude. However, this information is given by the uniform $C^{1,\underline\alpha}$ estimate in Remark \ref{alphastar}.
%
Similarly,
\begin{eqnarray*}
&&r_k\left|\int_{B_R\cap B(\infty)}b_{k}(\hat z_k+r_kz)\cdot\nabla w_k(z)\phi(z)\mathrm{d}z\right|\nonumber\\
&\leq& c\frac{\eta(\hat z_k)r_k^{1-\alpha(z_k)}}{L_k}\int_{B_R\cap B(\infty)}|b_k(\hat z_k+r_kz)|\cdot|\nabla u_k(\hat z_k+r_kz)|\nonumber\\
&\leq&c\frac{r_k^{1-\alpha(\hat z_k)-\frac{n}{m_2(\hat z_k)}}}{L_k}\sup_{B_R\cap B(\infty)}|\nabla u_k(\hat z_k+r_kz)|\to0,
\end{eqnarray*}
using as before the information given by Remark \ref{alphastar}; that is, boundedness of the $|\nabla u_k|$'s.
In {\bf Case 2} we have to deal with boundary conditions. In case of Dirichlet boundary conditions, using a first order Taylor expansion of $g_{\varepsilon_k}$ with Lagrange form of the remainder, the $C^{1,\alpha(\cdot)}$ regularity of $g$ and Lemma \ref{keylemma} imply on compact sets $K'$ of $\{x_n=0\}$

\begin{eqnarray*}
|\overline w_k(x',0)|&=&|w_k(x',0)-\nabla w_k(0)\cdot(x',0)|\\
&=&\frac{\eta(\hat z_k)}{L_kr_k^{1+\alpha(z_k)}}\left|g_{\varepsilon_k}(\hat z_k+r_k(x',0))-g_{\varepsilon_k}(\hat z_k)-r_k\nabla_{x'}g_{\varepsilon_k}(\hat z_k)\cdot x'\right|\\
&\leq& c \frac{r_k^{1+\max\{\alpha(\hat z_k+r_k(\xi',0)),\alpha(\hat z_k)\}}}{L_kr_k^{1+\alpha(z_k)}}\to0.
\end{eqnarray*}
In the last line, the point $\xi'$ stands for an intermediate point between $0$ and $x'$.

In case of Neumann boundary condition, after integration by parts in the equation we can show that the contribution on the boundary vanishes

\begin{eqnarray*}
&&\left|\int_{B_R\cap\partial B(\infty)}(A_{\varepsilon_k}(\hat z_k+r_kz)\nabla w_k(z)-A_{\varepsilon_k}(\hat z_k)\nabla w_k(0))\cdot\nu\phi(z)\right|\\
&\leq&\frac{\eta(\hat z_k)r_k}{L_kr_k^{1+\alpha(z_k)}}\int_{B_R\cap\partial B(\infty)}|h_{\varepsilon_k}(\hat z_k+r_kz)-h_{\varepsilon_k}(\hat z_k)|\cdot|\phi(z)|\\
&\leq& c\frac{1}{L_kr_k^{\alpha(z_k)}}\int_{B_R\cap\partial B(\infty)}r_k^{\max\{\alpha(\hat z_k+r_kz),\alpha(\hat z_k)\}}\to0.
\end{eqnarray*}

The second term in the right hand side of \eqref{Lakoverw139}, which is the term with fields $F_{\varepsilon_k}$, can be treated in a similar way.

\noindent{\bf Step 4: the limit equation and a Liouville type theorem.}
One can reason as in the last part of proof of Proposition \ref{prop1}, obtaining that the limit function $\overline v$  belongs to $H^1_{\mathrm{loc}}(\mathbb{R}^{n})$ or $(H^1_{\{x_n=0\}})_{\mathrm{loc}}(\overline{\R^{n}_+})/H^1_{\mathrm{loc}}(\overline{\R^{n}_+})$ and solves
\begin{equation*}\label{entire02}
-\mathrm{div}\left(\hat A\nabla \overline v\right)=0 \quad \mathrm{in} \ \R^{n},\qquad \mathrm{or}\qquad
\begin{cases}
-\mathrm{div}\left(\hat A\nabla \overline v\right)=0 & \mathrm{in} \ \R^{n}_+,\\
\overline v=0 \qquad\mathrm{or}\qquad \hat A\nabla \overline v\cdot\nu=0 &\mathrm{on} \ \{x_n=0\},
\end{cases}
\end{equation*}
respectively in {\bf Cases 1,2} with constant coefficients limit matrix $\hat A=A(z_\infty)=\lim A_{\varepsilon_k}(\hat z_k)$. Nevertheless, $\overline v$ is non constant and has non constant gradient which is globally $\alpha_\infty$-H\"older continuous in $\R^n$ or $\R^n_+$.
After applying the linear map introduced in Proposition \ref{prop1}, we can reabsorb the constant coefficients limit matrix, and after possibly reflecting with respect to the hyperplane $C^{-1}\{x_n=0\}$, we have an entire harmonic function in $H^1_{\mathrm{loc}}(\mathbb{R}^{n})$ globally $C^{1,\alpha_\infty}$ with a non constant partial derivative. Hence this derivative is also harmonic and globally $\alpha_\infty$-H\"older continuous with $\alpha_\infty<1$, in clear contradiction with the Liouville theorem \cite[Corollary 2.3]{NorTavTerVer}.
\endproof

\section{An approximation scheme and Schauder estimates}
In this section we are going to construct an approximation scheme which allows to extend the estimates proved in the previous section to any weak solution to \eqref{eq1}; in other words, we are going to prove Theorem \ref{teo1} and Theorem \ref{teo2}. Then, in the last part of the section we show how to iterate the gradient estimate obtained in order to produce Schauder regularity for partial derivatives of our solutions of any order; that is, Corollary \ref{cor3}.

\subsection{An approximation scheme}
The approximation scheme exposed in this section follows some ideas contained in \cite[Section 6]{SirTerVit1}.
\begin{remark}\label{remarkcoe}
We remark that we can always localize our problem in half balls (or balls, which is a simpler case which we are not going to treat) with radii small enough to ensure local coercivity of the bilinear form given by
\begin{equation}\label{bilinear}
b(u,\phi)=\int_{B_r^+}A\nabla u\cdot\nabla\phi-Vu\phi-b\cdot\nabla u\phi,
\end{equation}
which implies, imposing boundary data on the curved part of the boundary $\partial^+B_r^+$, existence and uniqueness of solutions to \eqref{eq1}. Then, local estimates in generic domains are consequently obtained through a covering argument. In other words, our balls will be taken with $r\leq\min\{1/2,r_0\}$ where $r_0$ depends on the norms $\|V\|_{L^{m_1(\cdot)}}$ and $\|b\|_{L^{m_2(\cdot)}}$ and ellipticity constants of $A$ in the original domain where we would like to ensure regularity estimates. The coercivity can be ensured whenever $\underline m_1\geq\frac{n}{2}$ and $\underline m_2\geq n$. Since we are going to perturb the problem in order to regularize it, we would like to stress the fact that this coercivity radius $r_0$ can be taken in such a way as to guarantee coercivity of the $k$-perturbed bilinear forms, for $k$ large, as well. This is due to closeness of the regularized potentials and drifts to the original ones in Lebesgue norms, and closeness of regularized coefficients to original ones in $L^\infty$.
\end{remark}

\begin{proof}[Proof of Theorem \ref{teo1} and Theorem \ref{teo2}]
We develop a strategy which has some relevant differences in case of Dirichlet and Neumann boundary conditions at $B_r'$. We work in the setting of the regularization for H\"older estimates, but the reasoning applies a fortiori also in the case of regularization for gradient estimates. Taken a weak solution to \eqref{eq1} with either Dirichlet or Neumann boundary data at the flat boundary, as we have already noticed in Remark \ref{remarkcoe}, without loss of generality we can restrict ourselves to small radii $r\leq\min\{1/2,r_0\}$.

\noindent{\bf Case 1: Dirichlet BC.} Let $u\in H^1(B_r^+)$ be a generic weak solution to \eqref{eq1} with Dirichlet boundary condition at $B_r'$ and let us fix a radius $0<\overline r<r$. Along a sequence $k\to+\infty$ let us consider the regularizations of $A,f,F,V,b,g$ as in Section \ref{sectrego}. Mollifications $A_{\varepsilon}$ and $g_\varepsilon$ are well defined respectively in $B_{\frac{r+\overline r}{2}}^+$ and $B'_{\frac{r+\overline r}{2}}$ whenever $\varepsilon\leq\overline\varepsilon$ depending on $\overline r$. We remark that the trace $\Phi$ of the solution $u$ belongs to $H^{1/2}(\partial B_r^+)$ (since $B_r^+$ is a Lipschitz domain \cite{Gag}). Nevertheless, on $B_r'$ we know that $\Phi=g\in C^{0,\alpha(\cdot)}$. Hence, let us define
\begin{equation}\label{Phik}
\Phi_k:=\begin{cases}
\Phi &\mathrm{in \ }\partial^+B^+_{r}\\
\eta g_{\varepsilon_k}+(1-\eta)g &\mathrm{in \ }B'_{r}
\end{cases}
\end{equation}
and in general, for any regularization $G_\varepsilon$ of a datum $G$ by convolution, let us define $\tilde G_k:=\eta G_{\varepsilon_k}+(1-\eta)G$ in $B_r^+$,
where $\eta\in C^\infty_c(B_{\frac{r+\overline r}{2}})$ is a radially non increasing cut-off function with $0\leq\eta\leq1$ and $\eta\equiv1$ in $B_{\overline r}$. 
These junctions are introduced merely to ensure solvability for the Dirichlet problem. In fact, this way we will be able to prescribe traces, for the perturbed problems, belonging to $H^{1/2}(\partial B_r^+)$. It is very easy to see that $\|\Phi_k\|_{L^2(\partial B_r^+)}\leq c$ uniformly in $k$ and that $\Phi_k\to\Phi$ strongly in $L^2(\partial B_r^+)$. Actually, one can show that the fractional Gagliardo seminorm
\begin{equation}\label{H1/2}
[\Phi_k]_{H^{1/2}(\partial B_r^+)}=\iint_{\partial B_r^+\times\partial B_r^+}\frac{|\Phi_k(x)-\Phi_k(y)|^2}{|x-y|^{n}}\mathrm{d}x\mathrm{d}y\leq c
\end{equation}
is bounded uniformly in $k$ (we prove the previous claim in Remark \ref{claim}). Let us consider, for any trace $\Phi_k$ an $H^1(B_r^+)$-extension $u_{\Phi_k}$. Hence
$$\|u_{\Phi_k}\|_{H^1(B_r^+)}\leq c\|\Phi_k\|_{H^{1/2}(\partial B_r^+)}\leq c$$
and hence $u_{\Phi_k}$ converge weakly, up to consider a subsequence, to a function $\overline v\in H^1(B_r^+)$. Nevertheless, by compact trace embedding, $\mathrm{Tr}u_{\Phi_k}\to \mathrm{Tr}\overline v$ strongly in $L^2(\partial B_r^+)$ which implies that $\mathrm{Tr}\overline v=\Phi$. Hence, as extension of $\Phi$ we consider exactly $u_\Phi:=\overline v$ and we define $w=u-u_\Phi\in H^1_0(B_r^+)$. In particular, $w$ is the unique solution to
\begin{equation}\label{poiss3eps111}
\begin{cases}
-\mathrm{div}\left(A\nabla w\right)=f+\mathrm{div}F+Vw+b\cdot\nabla w+\mathrm{div}\left(A\nabla u_\Phi\right)+Vu_\Phi+b\cdot\nabla u_\Phi &\mathrm{in} \ B_r^+\\
w=0 &\mathrm{on} \ \partial B^+_r.
\end{cases}
\end{equation}
This is due to continuity and coercivity of the bilinear form \eqref{bilinear} and applying the Lax-Milgram theorem with linear form
$$\langle L,\phi\rangle=\int_{B_r^+}f\phi+F\cdot\nabla\phi+A\nabla u_\Phi\cdot\nabla\phi+Vu_\Phi\phi+b\cdot\nabla u_\Phi\phi.$$

Let us now consider the unique solution $w_k\in H^1_0(B_r^+)$ to
\begin{equation}\label{poiss3eps11}
\begin{cases}
-\mathrm{div}\left(\tilde A_{k}\nabla w_k\right)=f_k+\mathrm{div}F_{k}+V_kw_k+b_k\cdot\nabla w_k +\mathrm{div}\left(\tilde A_k\nabla u_{\Phi_k}\right)+V_ku_{\Phi_k}+b_k\cdot\nabla u_{\Phi_k}&\mathrm{in} \ B_r^+\\
w_k=0 &\mathrm{on} \ \partial B^+_r.
\end{cases}
\end{equation}
As we have previously remarked, solvability and uniqueness of the previous problem in $H^1_0(B_r^+)$ come from coercivity of the $k$-perturbed bilinear form
\begin{equation}\label{bilineark}
b_k(v,\phi)=\int_{B_r^+}\tilde A_k\nabla v\cdot\nabla\phi-V_kv\phi-b_k\cdot\nabla v\phi,
\end{equation}
and applying the Lax-Milgram theorem with linear form
$$\langle L_k,\phi\rangle=\int_{B_r^+}f_k\phi+F_k\cdot\nabla\phi+\tilde A_k\nabla u_{\Phi_k}\cdot\nabla\phi+V_ku_{\Phi_k}\phi+b_k\cdot\nabla u_{\Phi_k}\phi.$$
The sequence of the $w_k$'s converges to $w$.
This is done using strong convergences of sequences $\{f_k\}$, $\{F_k\}$, $\{V_k\}$ and $\{b_k\}$ to $f,F,V,b$ in the right variable exponent Lebesgue spaces. In fact, testing \eqref{poiss3eps11} with $w_k$ and using Sobolev embeddings one gets a uniform-in-$k$ constant such that
$$\|w_k\|_{H^1_0(B_r^+)}\leq c\left(\|f_k\|_{L^{p(\cdot)}(B_r^+)}+\|F_k\|_{L^{q(\cdot)}(B_r^+)}+\|\Phi_k\|_{H^{1/2}(\partial B_r^+)}\right).$$
The constant above is uniform since we have $\|V_k\|_{L^{m_1(\cdot)}(B_r^+)}+\|b_k\|_{L^{m_2(\cdot)}(B_r^+)}\leq c$ (the constant itself depends on them). We have used also the fact that there exists a positive constant which is uniform in $k$ and depends only on ellipticity constants of $A$ such that the quadratic forms associated to $\tilde A_k$ satisfy
\begin{equation*}\label{quadratic}
\frac{1}{c}\int_{B_r^+}|\nabla v|^2\leq \int_{B_r^+}\tilde A_k\nabla v\cdot\nabla v\leq c\int_{B_r^+}|\nabla v|^2.
\end{equation*}
Hence, one gets easily weak convergence in $H^1_0(B_r^+)$ to a certain function. Using pointwise convergences and Vitali's convergence theorem we obtain that such a function is solution to \eqref{poiss3eps111} and hence must coincide with $w$ by uniqueness (for details we refer to \cite[Lemma 2.12]{SirTerVit1}).

Hence, $u_k:=w_k+u_{\Phi_k}$ weakly converges to $w+u_{\Phi}=u$ with
$$\|u_k\|_{H^1(B_r^+)}\leq\|w_k\|_{H^1_0(B_r^+)}+c\|\Phi_k\|_{H^{1/2}(\partial B_r^+)}$$
and it is solution to

\begin{equation*}
\begin{cases}
-\mathrm{div}\left(\tilde A_{k}\nabla u_k\right)=f_k+\mathrm{div}F_{k}+V_ku_k+b_k\cdot\nabla u_k  &\mathrm{in} \ B_r^+\\
u_k=\Phi_k &\mathrm{on} \ \partial B^+_r.
\end{cases}
\end{equation*}

We remark that sequence $\{u_k\}$ satisfies the uniform estimate \eqref{C0alphaesteps} in Proposition \ref{prop1} on $B_{\overline r}^+$; that is,
\begin{equation*}
\|u_k\|_{C^{0,\alpha(\cdot)}(B_{\overline r/2}^+)}\leq c\left(\|u_k\|_{L^\infty(B_{\overline r}^+)}+\|f_k\|_{L^{p(\cdot)}(B_{\overline r}^+)}+\|F_k\|_{L^{q(\cdot)}(B_{\overline r}^+)}+\|g_{\varepsilon_k}\|_{C^{0,\alpha(\cdot)}(B'_{\overline r})}\right),
\end{equation*}
with a constant which depends on $\|V_k\|_{L^{m_1(\cdot)}}$ and $\|b_k\|_{L^{m_2(\cdot)}}$. Since the right hand side is in turns uniformly bounded in $k$ (by convergences, uniform energy bounds and Lemma \ref{keylemma}), in particular the convergence $u_k\to u$ is uniform on compact subsets of $B^+_{\overline r/2}\cup B'_{\overline r/2}$, which means that one can pass to the limit in the previous estimate getting for the general solution $u$ the desired estimate (up to restrict a bit the radius of the ball $\tilde r<\overline r$).

\noindent{\bf Case 2: Neumann BC.}
Let us consider, along a sequence $k\to+\infty$, the unique solution $w_k$ to the following problem
\begin{equation*}\label{poiss3eps}
\begin{cases}
-\mathrm{div}\left(\tilde A_{k}\nabla w_k\right)=f_k+\mathrm{div}F_{k}+V_kw_k+b_k\cdot\nabla w_k+\mathrm{div}\left(\tilde A_k\nabla u\right)+V_ku+b_k\cdot\nabla u &\mathrm{in} \ B_r^+\\
\tilde A_{k}\nabla w_k\cdot\nu=h_k-\tilde A_k\nabla u\cdot\nu &\mathrm{on} \ B'_r\\
w_k=0 &\mathrm{on} \ \partial^+B^+_r.
\end{cases}
\end{equation*}
This time, solvability and uniqueness of the previous problem in $H^1_0(B_r^+\cup B_r')$ come from coercivity of the bilinear form \eqref{bilineark}
and applying the Lax-Milgram theorem with linear form
$$\langle L_k,\phi\rangle=\int_{B_r^+}f_k\phi+F_k\cdot\nabla\phi+\tilde A_k\nabla u\cdot\nabla\phi+V_ku\phi+b_k\cdot\nabla u\phi \ +\int_{B_r'}h_k\phi-\tilde A_k\nabla u\cdot\nu.$$
We argue as before obtaining easily that $w_k$ weakly converge to a function $w$ which is the unique solution to
\begin{equation*}\label{poiss4eps}
\begin{cases}
-\mathrm{div}\left(A\nabla w\right)=Vw+b\cdot\nabla w &\mathrm{in} \ B_r^+\\
A\nabla w\cdot\nu=0 &\mathrm{on} \ B'_r\\
w=0 &\mathrm{on} \ \partial^+B^+_r,
\end{cases}
\end{equation*}
which however is the zero function.

Hence, $u_k:=w_k+u$ weakly converges to $u$. Moreover, the $u_k$'s weakly solve
\begin{equation*}
\begin{cases}
-\mathrm{div}\left(\tilde A_{k}\nabla u_k\right)=f_k+\mathrm{div}F_{k}+V_ku_k+b_k\cdot\nabla u_k  &\mathrm{in} \ B_r^+\\
\tilde A_k\nabla u_k\cdot\nu=h_k &\mathrm{on} \  B'_r\\
u_k=\Phi &\mathrm{on} \ \partial^+ B^+_r.
\end{cases}
\end{equation*}

Of course, as in {\bf Case 1}, the sequence of the $u_k$'s enjoys the uniform estimate \eqref{C0alphaesteps} in Proposition \ref{prop1}. Hence $u$ inherits the regularity estimate by uniform convergence on compact sets (up to consider a smaller ball with radius $\tilde r<\overline r$).

The case of gradient estimates is very similar (which is the proof of Theorem \ref{teo2}). We only remark that at some point, in order to transfer the uniform estimate \eqref{C1alphaesteps} of Proposition \ref{prop2} to the limit, one has to use the uniform convergence on compact sets of the sequences of partial derivatives $\{\partial_i u_k\}$.
\end{proof}

\begin{remark}\label{claim}
Let $\Phi_k$ as in \eqref{Phik}. Then \eqref{H1/2} holds true.
\end{remark}
\proof
We are going to estimate uniformly in $k$ the following fractional Gagliardo seminorm
$$[\Phi_k]_{H^{1/2}(\partial B_r^+)}=\iint_{\partial B_r^+\times\partial B_r^+}\frac{|\Phi_k(x)-\Phi_k(y)|^2}{|x-y|^{n}}.$$
One can split the double integral into several pieces, using different information to manage the estimate. We can reduce ourselves to the following case:

\noindent{\bf Case 1:} $x\in B'_{\frac{r+\overline r}{2}}$ and $y\in\partial B_r^+$.

In fact, the complementary can be divided into the union of $x\in\partial B_r^+\setminus B'_{\frac{r+\overline r}{2}}$ and $y\in\partial B^+_r\setminus B'_{\frac{r+\overline r}{2}}$ with $x\in\partial B_r^+\setminus B'_{\frac{r+\overline r}{2}}$ and $y\in B'_{\frac{r+\overline r}{2}}$. By symmetry, the third case is part of the first one, while the integral contribution in the second case can be trivially estimated by
$$\iint_{(\partial B_r^+\setminus B'_{\frac{r+\overline r}{2}})^2}\frac{|\Phi_k(x)-\Phi_k(y)|^2}{|x-y|^{n}}=\iint_{(\partial B_r^+\setminus B'_{\frac{r+\overline r}{2}})^2}\frac{|\Phi(x)-\Phi(y)|^2}{|x-y|^{n}}\leq[\Phi]_{H^{1/2}(\partial B_r^+)}.$$

Hence, we consider the subcase

\noindent{\bf Case 1.1:} $x\in B'_{\frac{r+\overline r}{2}}$ and $y\in\partial^+ B_r^+$.

In this case points are not too close; that is, $|x-y|\geq\frac{r-\overline r}{2}$, and hence the integral contribution can be estimated by some constant times $\|\Phi_k\|_{L^2(\partial B_r^+)}$, which is however uniformly bounded.

Then, we consider another subcase

\noindent{\bf Case 1.2:} $x\in B'_{\frac{r+\overline r}{2}}$ and $y\in B'_{r}\setminus B'_{\frac{r+\overline r}{2}}$.

Actually, if $x\in B'_{\overline r}$, again points are not so close $|x-y|\geq \frac{r-\overline r}{2}$. Hence, if $x\in B'_{\frac{r+\overline r}{2}}\setminus B'_{\overline r}$ and $y\in B'_{r}\setminus B'_{\frac{r+\overline r}{2}}$, using that $\eta(y)=0$, $\underline\alpha$-H\"older continuity of $g$ and Lipschitz continuity of $\eta$
\begin{eqnarray*}
|\Phi_k(x)-\Phi_k(y)|^2&=&|\eta(x)g_{\varepsilon_k}(x)+(1-\eta(x))g(x)-g(y)|^2\\
&=&|(\eta(x)-\eta(y))(g_{\varepsilon_k}(x)-g(x))+g(x)-g(y)|^2\\
&\leq& c\varepsilon^{2\underline\alpha}|x-y|^{2}\left(\int_{\mathrm{supp}\tilde\eta}\tilde\eta(t)|t|^{\underline\alpha}\right)^2+|g(x)-g(y)|^2
\end{eqnarray*}
which divided by the kernel $|x-y|^n$ are integrable terms.

Hence, we consider the last subcase

\noindent{\bf Case 1.3:} $x\in B'_{\frac{r+\overline r}{2}}$ and $y\in B'_{\frac{r+\overline r}{2}}$.

We consider three subcases of {\bf Case 1.3}:

\noindent{\bf Case 1.3.1:} $x\in B'_{\frac{r+\overline r}{2}}\setminus B'_{\overline r}$ and $y\in B'_{\overline r}$.

Using that $1-\eta(y)=0$,
\begin{eqnarray*}
|\Phi_k(x)-\Phi_k(y)|^2&=&|\eta(x)g_{\varepsilon_k}(x)+(1-\eta(x))g(x)-g_{\varepsilon_k}(y)|^2\\
&=&|(\eta(x)-\eta(y))(g(x)-g_{\varepsilon_k}(x))+g_{\varepsilon_k}(x)-g_{\varepsilon_k}(y)|^2
\end{eqnarray*}
and hence, we can conclude if we are able to bound uniformly
\begin{equation}\label{Ik}
I_k:=\iint_{(B'_{\frac{r+\overline r}{2}})^2}\frac{|g_{\varepsilon_k}(x)-g_{\varepsilon_k}(y)|^2}{|x-y|^n}\leq c.
\end{equation}
One can show that also in the last two subcases we have left behind; that is,

\noindent{\bf Case 1.3.2:} $x\in B'_{\frac{r+\overline r}{2}}\setminus B'_{\overline r}$ and $y\in B'_{\frac{r+\overline r}{2}}\setminus B'_{\overline r}$

and

\noindent{\bf Case 1.3.3:} $x\in B'_{\overline r}$ and $y\in B'_{\overline r}$

the key point is the uniform bound of $I_k$ in \eqref{Ik}. Hence, in order to prove it, one can reason as in \cite[Lemma A.1.]{Bra},

\begin{eqnarray*}
I_k&=&\iint_{(x,y)\in(B'_{\frac{r+\overline r}{2}})^2}\frac{|\int_{t\in B'_{\varepsilon_k}}\tilde \eta_{\varepsilon_k}(t)(g(x-t)-g(y-t))|^2}{|x-y|^n}\\
&\leq&\iint_{(x,y)\in(B'_{\frac{r+\overline r}{2}})^2}\frac{\int_{t\in B'_{\varepsilon_k}}\tilde \eta_{\varepsilon_k}(t)|g(x-t)-g(y-t)|^2}{|x-y|^n},
\end{eqnarray*}
where in the last inequality we have used Jensen's inequality and the fact that $\tilde \eta_{\varepsilon_k}(t)\mathrm{d}t$ is a probability measure. We can conclude using the fact that $\tilde \eta_{\varepsilon_k}$ is supported in $B'_{\varepsilon_k}$ and hence for the function
$$H(x,y)=\frac{g(x)-g(y)}{|x-y|^{n/2}}\in L^2(B_r'\times B_r')$$
we have by continuity of translations
$$\|H(\cdot-t,\cdot-t)\|_{L^2(B'_{\frac{r+\overline r}{2}}\times B'_{\frac{r+\overline r}{2}})}\leq c,$$
which is uniform if $|t|\leq\varepsilon_k\leq\overline\varepsilon$.
\endproof

\subsection{Schauder estimates with variable exponent}
In this last section we prove Corollary \ref{cor3}; that is, we obtain local boundary Schauder estimates for weak solutions to

\begin{equation*}\label{eq2bis}
\begin{cases}
-\mathrm{div}\left(A\nabla u\right)=\mathrm{div}F &\mathrm{in \ } B_r\cap\Omega\\
u=g\quad\mathrm{or}\quad A\nabla u\cdot\nu=h &\mathrm{on \ } B_r\cap\partial\Omega,
\end{cases}
\end{equation*}
By applying the diffeomorphism introduced in \eqref{diffeomorphism}, and using the lemmas and reasonings in Section \ref{sectdiffeo}, the proof of Corollary \ref{cor3} is actually implied by the following result

\begin{Proposition}
Let $r>0$, $k\geq0$, $\alpha\in \mathcal{A}^{\log}(B^+_r)$, and $u$ be a weak solution to
\begin{equation}\label{eqscha}
\begin{cases}
-\mathrm{div}\left(A\nabla u\right)=\mathrm{div}F &\mathrm{in \ } B_r^+\\
u=g\quad\mathrm{or}\quad A\nabla u\cdot\nu=h &\mathrm{on \ } B_r',
\end{cases}
\end{equation}
with $A,F\in C^{k,\alpha(\cdot)}(B^+_r)$, $g\in C^{k+1,\alpha(\cdot)}(B'_r)$ or $h\in C^{k,\alpha(\cdot)}(B'_r)$. Then $u\in C^{k+1,\alpha(\cdot)}_{\mathrm{loc}}(B^+_r\cup B'_r)$.
\end{Proposition}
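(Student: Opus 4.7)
\medskip

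\noindent\textbf{Proof proposal.} The plan is to argue by induction on $k\geq 0$. The base case $k=0$ is the content of Theorem \ref{teo2} applied with $f\equiv 0$, $V\equiv 0$, $b\equiv 0$, which gives $u\in C^{1,\alpha(\cdot)}_{\mathrm{loc}}(B_r^+\cup B_r')$. For the inductive step, assume the result has been established for data in $C^{k,\alpha(\cdot)}$, and suppose now $A,F\in C^{k+1,\alpha(\cdot)}(B_r^+)$ with $g\in C^{k+2,\alpha(\cdot)}(B_r')$ or $h\in C^{k+1,\alpha(\cdot)}(B_r')$. By applying the inductive hypothesis at level $k$ (with data having a fortiori the required regularity), we first secure $u\in C^{k+1,\alpha(\cdot)}_{\mathrm{loc}}(B_r^+\cup B_r')$.

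\medskip

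\noindent\textbf{Tangential direction.} For each $j=1,\dots,n-1$, using tangential difference quotients $\Delta_j^h u=(u(\cdot+he_j)-u)/h$ (which preserve the flat boundary $B_r'$), passing to the limit, $v_j:=\partial_j u$ is a weak solution of
\begin{equation*}
\begin{cases}
-\mathrm{div}(A\nabla v_j)=\mathrm{div}\,\tilde F_j, & \tilde F_j:=\partial_j F+(\partial_j A)\nabla u,\quad\text{in }B_r^+,\\
v_j=\partial_j g\quad\text{or}\quad A\nabla v_j\cdot\nu=\tilde h_j,\quad \tilde h_j:=\partial_j h+\sum_i(\partial_j a_{in})\partial_i u, & \text{on }B_r'.
\end{cases}
\end{equation*}
Since $\nabla u\in C^{k,\alpha(\cdot)}_{\mathrm{loc}}$ by the first step, the product lemma yields $\tilde F_j\in C^{k,\alpha(\cdot)}_{\mathrm{loc}}$ and $\tilde h_j\in C^{k,\alpha(\cdot)}_{\mathrm{loc}}$, while $\partial_j g\in C^{k+1,\alpha(\cdot)}_{\mathrm{loc}}$. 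The inductive hypothesis at level $k$ applied to $v_j$ then gives $v_j=\partial_j u\in C^{k+1,\alpha(\cdot)}_{\mathrm{loc}}(B_r^+\cup B_r')$ for every $j=1,\dots,n-1$. Consequently $D^\beta u\in C^{0,\alpha(\cdot)}_{\mathrm{loc}}$ for every multiindex $\beta$ with $|\beta|=k+2$ and $\beta_j\geq 1$ for some $j<n$.

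\medskip

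\noindent\textbf{Normal direction.} It remains to control $\partial_n^{k+2}u$. Expanding the divergence and isolating the $(n,n)$ second-order term,
\begin{equation*}
a_{nn}\,\partial_n^2 u=-\!\!\!\sum_{(i,j)\neq(n,n)}\!\!a_{ij}\,\partial_i\partial_j u-\sum_{i,j}(\partial_i a_{ij})\,\partial_j u-\mathrm{div}\,F,
\end{equation*}
with $a_{nn}\geq\lambda>0$ by ellipticity. Applying $\partial_n^k$ and a Leibniz expansion,
\begin{equation*}
a_{nn}\,\partial_n^{k+2}u=\partial_n^k\Bigl[-\!\!\!\sum_{(i,j)\neq(n,n)}\!\!a_{ij}\partial_i\partial_j u-\sum_{i,j}(\partial_i a_{ij})\partial_j u-\mathrm{div}\,F\Bigr]-\sum_{\ell=1}^k\binom{k}{\ell}(\partial_n^\ell a_{nn})\,\partial_n^{k+2-\ell}u.
\end{equation*}
Every term on the right is of one of three types: $(i)$ a partial derivative of $u$ of order at most $k+1$, already $\alpha(\cdot)$-H\"older by the first step; $(ii)$ a partial derivative of order $k+2$ of $u$ containing at least one tangential factor, controlled by the tangential step; $(iii)$ a derivative of $A$ or $F$ of order at most $k+1$, H\"older by assumption. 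Using the product lemma and dividing by $a_{nn}\in C^{k+1,\alpha(\cdot)}$ (whose reciprocal is H\"older), we conclude $\partial_n^{k+2}u\in C^{0,\alpha(\cdot)}_{\mathrm{loc}}$, completing the induction.

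\medskip

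\noindent The main technical obstacle is the rigorous justification of the tangential differentiation step, i.e.\ showing that the difference quotient argument together with the regularity already gained via the inductive hypothesis produces a genuine weak solution of the derived problem falling within the framework of the inductive statement; once the extra forcing $(\partial_j A)\nabla u$ and the extra Neumann term $\sum_i(\partial_j a_{in})\partial_i u$ are placed in the correct variable exponent H\"older class via the product lemma, the remainder of the argument is an algebraic bookkeeping made legitimate by the ellipticity bound on $a_{nn}$.
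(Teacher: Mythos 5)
Your proof is correct and follows essentially the same strategy as the paper's: induction on $k$ with base case given by Theorem \ref{teo2}, a tangential step differentiating the equation and re-applying the inductive hypothesis (the paper differentiates directly since $u\in C^{k+1,\alpha(\cdot)}_{\mathrm{loc}}$ is already secured, while you go through difference quotients, a slightly more careful but equivalent justification), and a normal step isolating $a_{nn}\partial_n^2 u$ from the equation and using ellipticity. The only cosmetic difference is in the normal step: the paper simply observes that $\partial_n^2 u$ equals an explicit sum of terms already in $C^{k,\alpha(\cdot)}_{\mathrm{loc}}$, whereas you apply $\partial_n^k$ and a Leibniz expansion, which reaches the same conclusion a little more verbosely.
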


We would like to remark here that the proof is based on the iteration of the gradient estimate in Theorem \ref{teo2}.

\proof
We procede by induction. The result for $k=0$ is contained in Theorem \ref{teo2}. Hence, we suppose the result true for a generic integer $k\geq0$ and we prove it for the case $k+1$; that is, we are assuming
$A,F\in C^{k+1,\alpha(\cdot)}(B^+_r)$, $g\in C^{k+2,\alpha(\cdot)}(B'_r)$ or $h\in C^{k+1,\alpha(\cdot)}(B'_r)$, and we want to prove that actually $u\in C^{k+2,\alpha(\cdot)}_{\mathrm{loc}}(B^+_r\cup B'_r)$. First, we prove that actually
\begin{equation}\label{partiali}
\partial_{x_i}u\in C^{k+1,\alpha(\cdot)}_{\mathrm{loc}}(B^+_r\cup B'_r)\qquad \mathrm{for \  any \ } i=1,...,n-1.
\end{equation}
In fact, differentiating the equation, one gets
\begin{equation*}
\begin{cases}
-\mathrm{div}\left(A\nabla (\partial_{x_i}u)\right)=\mathrm{div}\left(\partial_{x_i}A\nabla u+\partial_{x_i}F\right) &\mathrm{in \ } B_r^+\\
\partial_{x_i}u=\partial_{x_i}g\quad\mathrm{or}\quad A\nabla (\partial_{x_i}u)\cdot\nu=\partial_{x_i}A\nabla u\cdot\nu+\partial_{x_i}h &\mathrm{on \ } B_r'.
\end{cases}
\end{equation*}
Using the inductive hypothesis, and the fact that we already know that $u\in C^{k+1,\alpha(\cdot)}_{\mathrm{loc}}(B^+_r\cup B'_r)$ (which implies $\nabla u\in C^{k,\alpha(\cdot)}_{\mathrm{loc}}(B^+_r\cup B'_r)$), then \eqref{partiali} follows.
In order to conclude, it remains to prove
\begin{equation}\label{partiali2}
\partial_{x_n}u\in C^{k+1,\alpha(\cdot)}_{\mathrm{loc}}(B^+_r\cup B'_r).
\end{equation}
Actually, \eqref{partiali2} follows if we prove that $\partial^2_{x_nx_n}u\in C^{k,\alpha(\cdot)}_{\mathrm{loc}}(B^+_r\cup B'_r)$. In fact, by \eqref{partiali} we already know that $\partial_{x_i}\partial_{x_n}u\in C^{k,\alpha(\cdot)}_{\mathrm{loc}}(B^+_r\cup B'_r)$ for any $i=1,...,n-1$. Hence, considering equation \eqref{eqscha}, we have the following
\begin{equation*}
-\partial_{x_n}\left(A\nabla u\cdot \vec e_n\right)=\mathrm{div}F+\sum_{i=1}^{n-1}\partial_{x_i}\left(A\nabla u\cdot \vec e_i\right).
\end{equation*}
Hence
\begin{equation*}
-a_{n,n}\partial^2_{x_nx_n}u=\mathrm{div}F+\sum_{i=1}^{n-1}\partial_{x_i}\left(A\nabla u\cdot \vec e_i\right)+\partial_{x_n}a_{n,n}\partial_{x_n}u+\partial_{x_n}\left(\sum_{i=1}^{n-1}a_{n,i}\partial_{x_i}u\right).
\end{equation*}
The uniform ellipticity of $A$ implies that $a_{n,n}(x)=A(x)\vec e_n\cdot \vec e_n\geq\lambda>0$, and together with the expression above allows to conclude. In fact $\partial^2_{x_nx_n}u$ can be expressed as sum of $C^{k,\alpha(\cdot)}_{\mathrm{loc}}(B^+_r\cup B'_r)$-functions.
\endproof

\section*{Acknowledgment}
I would like to thank Susanna Terracini for many fruitful conversations on the paper.

\end{document}